\newtheorem{theorem}{Theorem}[section]
\newtheorem{lemma}[theorem]{Lemma}
\newtheorem{proposition}[theorem]{Proposition}
\newtheorem{corollary}[theorem]{Corollary}
\theoremstyle{definition}
\newtheorem{definition}[theorem]{Definition}
\newtheorem{remark}[theorem]{Remark}
\theoremstyle{remark}
\let\phi=\varphi
\def\epsilon{\varepsilon}
\def\0{\mathbf{0}}
\newcommand{\comment}[1]{}
\numberwithin{equation}{section}
\let\epsilon=\varepsilon
\def\@maketitle{%
  \newpage
  \null
  \vskip 2em%
  \begin{center}%
  \let \footnote \thanks
    {\Large\bfseries \@title \par}%
    \vskip 1.5em%
    {\normalsize
      \lineskip .5em%
      \begin{tabular}[t]{c}%
        \@author
      \end{tabular}\par}%
    \vskip 1em%
    {\normalsize \@date}%
  \end{center}%
  \par
  \vskip 1.5em}
\begin{document}

\title{\sc \huge On the linearity of order-isomorphisms}

\author{Bas Lemmens%
\thanks{Email: \texttt{B.Lemmens@kent.ac.uk}}}
\affil{School of Mathematics, Statistics \& Actuarial Science,
University of Kent, Canterbury, CT2 7NX, United Kingdom}

\author{Onno van Gaans%
\thanks{Email: \texttt{vangaans@math.leidenuniv.nl}}}
\affil{Mathematical Institute, Leiden University, P.O.Box 9512, 2300 RA Leiden, The Netherlands}

\author{Hendrik van Imhoff%
\thanks{Email: \texttt{hvanimhoff@gmail.com}}}
\affil{Mathematical Institute, Leiden University, P.O.Box 9512, 2300 RA Leiden, The Netherlands}

\maketitle
\date{}

\begin{abstract}
A  basic problem in the theory of partially ordered vector spaces is to characterise those cones on which  every order-isomorphism is linear. We show that this is the case for every Archimedean cone that equals the inf-sup hull of the sum of its engaged extreme rays. This condition is milder than existing ones and is satisfied by, for example, the cone of positive operators in the space of bounded self-adjoint operators on a Hilbert space. We also give a general form of order-isomorphisms on the inf-sup hull of the sum of all extreme rays of the cone, which extends results of Artstein-Avidan and Slomka to infinite dimensional  partially ordered vector spaces, and  prove the linearity of homogeneous order-isomorphisms  in a variety of new settings.  
\end{abstract}

{\small {\bf Keywords:} order-isomorphisms, affine maps, inf-sup hull}

{\small {\bf Subject Classification:} Primary 46B40; Secondary 15B48, 47H07 }


\section{Introduction}
A fundamental problem in the study of partially ordered vector spaces is to understand the structure of their order-isomorphisms, i.e., order preserving bijections whose inverses are also order preserving. In particular one would like to characterise those  partially ordered vector spaces on which all order-isomorphisms are affine. 

Pioneering research on this problem was motivated by special relativity theory where the causal order is considered on the Minkowski spacetime.
During the 1950s and 1960s several results were obtained in  finite dimensional spaces by  Alexandrov and Ov\v{c}innikova \cite{AO} and Zeeman \cite{Z}, who showed that the order-isomorphisms from the causal cone onto itself are linear. Later Alexandrov \cite{A} extended his result to order-isomorphisms on finite dimensional ordered vector spaces, where every extreme ray of the cone is engaged,  that is to say,  each  extreme ray of the cone lies in the linear span of the other extreme rays. Rothaus \cite{R} obtained a similar result where the domain of the order-isomorphism could also be the interior of the cone, but  he  assumes that the  cone does not have any isolated extreme rays, which is a stronger assumption  than the one used by  Alexandrov. In the 1970s Noll and Sch\"{a}ffer made numerous contributions to this area in a series of papers, \cite{NS,NS2,S1,S2}.
Like Alexandrov, they considered the case where the cone is the sum of its engaged extreme rays, but they do not  require the  partially ordered vector spaces to be finite dimensional.
More recently,  Artstein-Avidan and Slomka \cite{AAS}  obtained a complete description of the order-isomorphisms between finite dimensional partially ordered vector spaces. 

In many natural infinite dimensional  settings the results of Noll and Sch\"{a}ffer are not applicable. A case in point is the space $B(H)_{\mathrm{sa}}$ consisting of bounded self-adjoint operators on a Hilbert $H$, ordered by the cone of positive (semi-definite) operators.  Even though the cone  $B(H)_{\mathrm{sa}}^+$ contains many engaged extreme rays, namely the rays through the rank-one projections, it does not satisfy the condition of Noll and Sch\"{a}ffer.  Even so  Moln\'{a}r \cite{M} showed,  by using operator algebra techniques, that every order-isomorphism on  $B(H)_{\mathrm{sa}}^+$ is linear.  In this paper we obtain a generalisation of \cite[Theorem A]{NS2}    by Noll and Sch\"{a}ffer that is sufficiently strong to yield  Moln\'{a}r's result.

Before we outline the main results in the paper, we point out that the domain on which the order-isomorphisms are considered plays a  key role. In the paper we will work on so called upper sets, i.e, sets  which contain all upper bounds of its elements. Such domains include cones, their interiors, and the whole space.  It turns out that without this assumption order-isomorphisms can be more complicated. Indeed,  \v{S}emrl \cite{Sem} gave a complete characterisation of the order-isomorphisms on order intervals of $B(H)_{\mathrm{sa}}$, which include maps that are not affine.

Our  generalisation  of \cite[Theorem A]{NS2}  exploits the fact that infima and suprema in a partially ordered vector space are preserved under  order-isomorphisms. 
Instead of the conditions imposed  by Noll and Sch\"{a}ffer, we assume that  the cone, $C$, is equal to the inf-sup hull of the positive span of its engaged extreme rays, which is much weaker. 
In other words, we require that each $x\in C$ can be written as $x=\inf_{\alpha\in A} (\sup_{\beta\in B} x_{\alpha,\beta})$, where each $x_{\alpha,\beta}$ belongs to 
\[
[0,\infty)_{\mathcal{R}_E} =\{ r_1+\cdots+r_n\colon r_i\in C\mbox{ an engaged extreme vector of $C$ for all $i$}\},
\] 
and $A$ and $B$ arbitrary index sets. The main result can be formulated as follows.  
\begin{theorem}\label{thrm,ext}
Suppose $U\subseteq (X,C)$ and $V\subseteq (Y,K)$  are upper sets in Archimedean partially ordered vector spaces, and $f\colon U\to V$ is an order-isomorphism. If $(X,C)$ is directed and $C$ equals the inf-sup hull of $[0,\infty)_{\mathcal{R}_E}$,  then $f$ is affine. 
\end{theorem}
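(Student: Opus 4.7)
My plan is to establish Theorem \ref{thrm,ext} in three steps that extend the approach of Noll--Sch\"affer \cite{NS2}. \emph{Reduction.} Since $U$ is an upper set, $u_0 + C \subseteq U$ for every $u_0 \in U$, so the translate $g_{u_0}(c) := f(u_0 + c) - f(u_0)$ is an order-isomorphism from $C$ onto its image with $g_{u_0}(0) = 0$; it therefore suffices to show each such $g_{u_0}$ is linear on $C$. Granting this, for any $u_0, u_1 \in U$ the directedness of $(X,C)$ provides a common upper bound $u_2 \in U$, and computing $f(u_2 + c)$ through both base points forces the difference $L_{u_0} - L_{u_1}$ of the associated linear maps to be simultaneously linear in $c$ and constant in $c$, hence zero, which gives affinity of $f$ on all of $U$.

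\emph{Step 1: linearity on $[0,\infty)_{\mathcal{R}_E}$.} Here I would apply (or adapt) the engaged-extreme-ray machinery, presumably established earlier in the paper. Because each engaged extreme vector $r$ lies in the linear span of the other extreme rays, there are enough order relations between $r$ and the rest of $C$ to force $g_{u_0}$ to preserve the vector-space structure along $\mathbb{R}_{\geq 0} r$. Combining this single-ray affinity with preservation of bounded infima and suprema (which any order-isomorphism between upper sets enjoys, since $x = \sup S$ in $U$ yields $f(x) = \sup f(S)$ in $V$ by a direct check) extends linearity from individual rays to finite positive sums, yielding linearity of $g_{u_0}$ on the convex cone $[0,\infty)_{\mathcal{R}_E}$.

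\emph{Step 2: inf-sup extension to $C$.} By hypothesis each $x \in C$ admits a representation $x = \inf_{\alpha \in A} \sup_{\beta \in B} x_{\alpha,\beta}$ with $x_{\alpha,\beta} \in [0,\infty)_{\mathcal{R}_E}$, and preservation of inf and sup gives $g_{u_0}(x) = \inf_\alpha \sup_\beta g_{u_0}(x_{\alpha,\beta})$. For $x, y \in C$ with such representations, the identities $\sup_\beta z_\beta + w = \sup_\beta(z_\beta + w)$ and $\inf_\alpha z_\alpha + w = \inf_\alpha(z_\alpha + w)$ --- valid in any partially ordered vector space whenever the relevant bounds exist --- let one rewrite $x + y$ itself as a double inf-sup of elements $x_{\alpha,\beta} + y_{\alpha',\beta'}$, which still belong to $[0,\infty)_{\mathcal{R}_E}$ since this set is closed under addition. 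Applying $g_{u_0}$, invoking the linearity from Step 1 on each building block, and reassembling the inf-sup then produces $g_{u_0}(x + y) = g_{u_0}(x) + g_{u_0}(y)$; an analogous but simpler computation handles positive homogeneity, completing the proof that $g_{u_0}$ is linear on $C$.

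\emph{Main obstacle.} The delicate point is Step 2: since $(X,C)$ is only assumed to be Archimedean and not a lattice, one must carefully verify that all the intermediate suprema and infima actually exist in $C$ and that the distributive manipulation of $x + y$ genuinely produces a valid two-level inf-sup representation with building blocks in $[0,\infty)_{\mathcal{R}_E}$. This is precisely where the hypothesis that $C$ \emph{equals} the inf-sup hull of $[0,\infty)_{\mathcal{R}_E}$ --- rather than merely being contained in some weaker closure of it --- is indispensable.
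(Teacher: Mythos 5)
Your skeleton is essentially the paper's proof: linearity (affinity) on the positive span of the engaged extreme rays, extension to its inf-sup hull via preservation of suprema and infima and the distributivity identity \eqref{eq,infsup}, and then patching over base points using directedness and the upper-set structure --- this is exactly Theorem~\ref{thrm,ns}, Lemma~\ref{lem,extend}/Proposition~\ref{prop,ns}, and the paper's proof of Theorem~\ref{thrm,ext}, respectively; your normalisation to $g_{u_0}$ with $g_{u_0}(0)=0$ and proving additivity plus positive homogeneity, instead of convex-linearity of $f$ on $[a,\infty)$, is only a cosmetic difference, and your patching argument via a common upper bound $u_2$ is the same as the paper's.

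The one substantive inaccuracy is your description of the mechanism inside Step 1. Passing from single-ray behaviour to finite positive sums of extreme vectors is \emph{not} obtained from preservation of infima and suprema: a sum $r_1+r_2$ of extreme vectors on distinct rays is in general not a supremum or infimum of elements of the individual rays, so sup/inf preservation gives nothing there. In the paper this cross-ray additivity is the technical heart: it comes from the parallelogram identity for distinct extreme rays (Lemma~\ref{lem,add}, proved by the geometric argument on images of extreme half-lines via Corollary~\ref{thrm,ehl}) and its iteration (Lemma~\ref{kerstboom}); engagedness is then used to make the scaling function $g_r$ independent of the base point and additive, whence $g_r(\lambda)=\lambda$ by Darboux's theorem. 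If your Step 1 is read as a citation of Theorem~\ref{thrm,ns}, your proof is complete; if it is read as an argument in its own right, it fails precisely at this point. Finally, the ``main obstacle'' you flag in Step 2 is less delicate than you suggest: all the intermediate suprema and infima on the image side exist automatically because they are images under the order-isomorphism of the suprema and infima furnished by the inf-sup representation on the domain side (computed relative to the translated cones), which is exactly how Lemma~\ref{lem,extend} handles it.
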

A key step in our argument is  Theorem \ref{thrm,ns}, which says that every order-isomorphism  $f$ from $[a,\infty) =\{a+x\colon x\in C\}$ onto $[b,\infty)=\{b+y\colon y\in K\}$  is the restriction of an affine map on the affine span of $[a,\infty)_{\mathcal{R}_E} = a+[0,\infty)_{\mathcal{R}_E} $. 
The proof requires a  careful reworking of some of the ideas in \cite{NS2}.

Of course not every order-isomorphism is affine, Simply consider the space $C(K)$, consisting  of continuous real functions on a compact Hausdorff space $K$, and the map $f\mapsto f^3$. 
On $C(K)$ Sch\"{a}ffer \cite{S1} showed that each order-isomorphism, which is homogeneous (of degree one), is linear. In \cite{S2} he  strengthened this result to general order unit spaces.
In finite dimensional spaces the existence of a disengaged extreme ray in the cone is necessary and sufficient to yield a nonlinear order-isomorphism. This follows from \cite[Theorem 1.7]{AAS}  by Artstein-Avidan and Slomka, who showed that any order-isomorphism in a finite dimensional space has a particular diagonal form. In Section 5 we obtain an infinite dimensional analogue  of this result. We also give an alternative condition that guarantees that all homogeneous order-isomorphisms are linear, which can applied in partially ordered vector spaces without an order unit such as $\ell^p(\mathbb{N})$ spaces.

\section{Preliminaries}
Let $X$ be a real vector space and $C$ be a cone in $X$, so $C$ is convex, $\lambda C\subseteq C$ for all $\lambda \geq 0$, and $C\cap -C=\{0\}$. 
The cone $C$ induces a partial order on $X$ by $x\leq_C y$ if $y-x\in C$. The pair $(X,C)$ is called a {\em partially ordered vector space}. For simplicity we write $\leq$ instead of $\leq_C$ if  $C$ is clear from the context, and we write $x<y$ if $x\leq y$ and $x\neq y$.

A partially ordered vector space $(X,C)$ is said to be {\em Archimedean} if for each $x\in X$ and $y\in C$ with $nx\leq y$ for all $n\geq 1$ we have that $x\leq 0$. 
A subset $G$ of $X$ is said to be {\em directed} if for each $x,y\in G$ there exists $z\in G$ such that $x\leq z$ and $y\leq z$. It is well known that $X$ is directed if and only if $C$ is generating, i.e., $X= C-C$. Given $x\leq y$ we define the {\em order interval} by $[x,y]=\{z\in X: x\leq z\leq y\}$. We denote the cone with apex $a$ by 
\[
[a,\infty) =\{a+x\colon x\in C\}.
\]

Extreme rays of the cone play an important role in this paper. A vector $e\in X\setminus\{0\}$ is called an {\it extreme vector} if  $0\leq e$, and $0\leq x \leq e$ implies that $x=\lambda e$ for some $\lambda\geq 0$, or, if $e\leq 0$, and $e\leq x\leq 0$ implies $x=\lambda e$ for some $\lambda\geq 0$. For an element $x\in C$ we define the {\it ray through} $x$ as $R_x=\{\lambda x:\lambda\geq 0\}$.
If  $e\in C$ is an extreme vector, $R_e$ is said to be an {\em extreme ray}.
The notion of an extreme ray  coincides with the ray being extreme in the convex sense. Indeed, a ray $R$ in $C$ is extreme if, and only if, for any two rays $R_1$ and $R_2$ in $C$  satisfying $R=\alpha R_1+(1-\alpha)R_2$ for some  $\alpha\in (0,1)$ we have that  $R_1=R_2$, see \cite[Lemma 1.43]{AT}.
Given an extreme ray $R$ we call $z+R$ an  {\em extreme half-line with apex} $z$.
The following elementary property of extremal vectors will be used frequently in the sequel, see \cite[Lemma 1.44]{AT}.
\begin{lemma}\label{lem,cad} In a partially ordered vector space $(X,C)$ any three  extremal vectors in $C$ that generate three distinct extremal rays are linearly independent.\end{lemma}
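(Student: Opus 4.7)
The plan is to argue by contradiction, assuming that three extreme vectors $e_1,e_2,e_3\in C$ generating distinct rays satisfy a nontrivial linear relation $\alpha_1 e_1+\alpha_2 e_2+\alpha_3 e_3=0$, and to derive, in each possible sign pattern of the $\alpha_i$, that two of the rays must coincide. The key tool is the defining property of extremality: if $f\in C$ is extreme and $0\le x\le \lambda f$ for some $\lambda\ge 0$, then $x\in R_f$.

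First I would dispose of the case where some coefficient is zero. If, say, $\alpha_3=0$, then $\alpha_1 e_1+\alpha_2 e_2=0$ with not both coefficients zero, so $e_1$ and $e_2$ are parallel; since both lie in $C\setminus\{0\}$ the proportionality constant must be positive, giving $R_{e_1}=R_{e_2}$, a contradiction. So we may assume all three $\alpha_i\ne 0$, and, after possibly multiplying the relation by $-1$, that at least one $\alpha_i$ is positive.

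Next I would split into two essential subcases according to the signs.

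\textbf{All $\alpha_i>0$:} Then for each $i$ we have $0\le \alpha_i e_i\le \alpha_1 e_1+\alpha_2 e_2+\alpha_3 e_3=0$, so $\alpha_i e_i\in C\cap(-C)=\{0\}$, forcing $e_i=0$, contradicting $e_i\ne 0$.

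\textbf{Mixed signs:} After relabeling it suffices to treat the case $\alpha_1>0$, $\alpha_3<0$ (and $\alpha_2$ of either sign); the remaining case is symmetric. Moving the negative terms to the other side produces an equality of the form $\sum_{i\in P}\alpha_i e_i=\sum_{j\in N}(-\alpha_j)e_j$ with $P,N$ the positive/negative index sets, both sides lying in $C$. In particular, picking any $i_0\in P$, we have $0\le \alpha_{i_0}e_{i_0}\le \sum_{j\in N}(-\alpha_j)e_j$, and in the two-term case this reduces to $0\le\alpha_{i_0}e_{i_0}\le (-\alpha_j)e_j$ for the unique $j\in N$, while in the one-versus-two configuration it reads $0\le\alpha_{i_0}e_{i_0}\le \sum_{j\in N}(-\alpha_j)e_j$ with $|N|=2$. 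In the former situation, extremality of $e_j$ immediately gives $\alpha_{i_0}e_{i_0}\in R_{e_j}$ with strictly positive scalar, so $R_{e_{i_0}}=R_{e_j}$, a contradiction. In the latter, one instead uses $0\le(-\alpha_{j_0})e_{j_0}\le \alpha_{i_0}e_{i_0}$ for some $j_0\in N$ (the only positive index now being $i_0$), and extremality of $e_{i_0}$ again forces $R_{e_{j_0}}=R_{e_{i_0}}$. Either way two of the three rays coincide, contradicting the hypothesis.

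The only subtlety I anticipate is bookkeeping the sign pattern so that every mixed case is covered by one of the two reductions above; once the sign analysis is organised cleanly the argument is essentially a single application of the extremality property combined with $C\cap(-C)=\{0\}$.
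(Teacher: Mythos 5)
Your argument is correct. Note that the paper itself gives no proof of this lemma: it simply cites \cite[Lemma~1.44]{AT}, and your sign-splitting argument is essentially the standard proof of that result, so there is nothing to reconcile. Concretely: the all-positive case via $C\cap(-C)=\{0\}$, and the two mixed configurations ($|P|=2,|N|=1$ and $|P|=1,|N|=2$) each reduced by $0\le x\le y$ with $y$ on an extreme ray to coincidence of two of the three rays, cover every nontrivial relation after normalising so that some coefficient is positive. The only slight imprecision is in the case of a vanishing coefficient: if, say, $\alpha_3=0$ and also $\alpha_2=0$, the relation does not give that $e_1$ and $e_2$ are ``parallel'' but rather forces $e_1=0$, which contradicts the requirement that extreme vectors are nonzero; either way a contradiction results, so this is a one-line fix rather than a gap.
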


Another useful basic observation is the following. 
\begin{lemma}\label{lem,arch}  Let $(X,C)$  be Archimedean. If $x,y\in X$ are such that $0\leq y\leq x$, and for each $0\leq \lambda\leq 1$  we have that $y\leq \lambda x$ or $\lambda x\leq y$, then there exists a $\mu\geq 0$ such that $y=\mu x$.\end{lemma}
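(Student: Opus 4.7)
The plan is to single out a natural scalar $\mu$ as a supremum and use the Archimedean property twice to squeeze $y$ between $\mu x$ from above and below. First, I would dispose of the trivial case $x=\0$ (then $y=\0$ and any $\mu\geq 0$ works). Assuming $x\neq \0$, set
\[
S=\{\lambda\in[0,1]\colon \lambda x\leq y\},
\]
which contains $0$ and is bounded above by $1$ (and $1\in S$ would already force $y=x$), so $\mu:=\sup S\in[0,1]$ is the natural candidate with $y=\mu x$.

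The first half is to verify $\mu x\leq y$. I would choose a sequence $(\lambda_n)\subset S$ with $\lambda_n\nearrow \mu$ and moreover $\mu-\lambda_n\leq 1/n$. Writing
\[
\mu x-y = (\mu-\lambda_n)x - (y-\lambda_n x),
\]
the second term lies in $-C$, so $\mu x-y\leq (\mu-\lambda_n)x$, and hence $n(\mu x-y)\leq x$ for every $n\geq 1$. Since $x\in C$, the Archimedean condition (applied to $z:=\mu x-y$ and $x\in C$) forces $\mu x-y\leq 0$.

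The second half is the reverse inequality $y\leq \mu x$. If $\mu=1$ it is just the given $y\leq x$. If $\mu<1$, I would use the dichotomy hypothesis: for any $\lambda\in(\mu,1]$ the alternative $\lambda x\leq y$ is excluded by the definition of supremum, so the hypothesis supplies $y\leq \lambda x$, i.e.\ $y-\mu x\leq (\lambda-\mu)x$. Specialising to $\lambda_n=\mu+(1-\mu)/n\in(\mu,1]$ gives $n(y-\mu x)\leq (1-\mu)x$ for all $n\geq 1$, and since $(1-\mu)x\in C$, a second application of the Archimedean property yields $y-\mu x\leq 0$. Combining the two bounds and using antisymmetry of $\leq$ produces $y=\mu x$ as desired.

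There is no serious obstacle here; the only point that needs care is recognising the proper role of the dichotomy: in a general partially ordered vector space $y$ need not be comparable to any given scalar multiple of $x$, so without the hypothesis one could not conclude that $y\leq \lambda x$ for $\lambda$ slightly above the supremum, and the upper Archimedean squeeze would be inaccessible. Once this is in place, both inequalities are obtained by essentially identical Archimedean arguments.
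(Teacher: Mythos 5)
Your proof is correct and takes essentially the same route as the paper: define $\mu$ as the supremum of the scalars $\lambda$ with $\lambda x\leq y$, prove $\mu x\leq y$ and $y\leq \mu x$ by two Archimedean squeezes, using the dichotomy hypothesis only for the upper bound and treating $\mu=1$ separately. The only differences are cosmetic (you dispose of $x=0$ explicitly and bound $n(y-\mu x)$ by $(1-\mu)x$, which cleanly gives the estimate for all $n\geq 1$ where the paper says ``for $n$ sufficiently large'').
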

\begin{proof} 
Let $x,y\in X$ be as in the statement. We may assume without loss of generality that $x$ and $y$ are non-zero.
Now  define $\mu=\sup\{\lambda\geq 0\colon \lambda x\leq y\}$.
By assumption $\mu$ is well-defined and $0\leq \mu\leq 1$.

Note that $\mu x\leq y$. Indeed, for $n\geq 1$ we have that $(\mu -1/n)x\leq y$, so that $n(\mu x -y)\leq x$, which implies that $\mu x\leq y$, as $(X,C)$ is Archimedean. 

To show that $y\leq \mu x$ we distinguish two cases: $0\leq \mu< 1$ and $\mu= 1$.
In the case $0\leq \mu<1$ we have that $y\leq (\mu +1/n)x$ for all $n$ sufficiently large. Thus, $n(y-\mu x)\leq x$, which shows that $y\leq\mu x$, as the space is Archimedean. 
If $\mu =1$, then $x=y$, since $y\leq x$ by assumption, and $x=\mu x\leq y$ as showed before. \end{proof}


Given vector spaces $X$ and $Y$, a map $f\colon X\to Y$ is called \emph{affine} if it is a translation of a linear map, that is, there is $a\in X$ such that $x\mapsto f(x+a)-f(a)$ is linear.  

Let $(X,C)$ and $(Y,K)$ be partially ordered vector spaces. A set $U\subseteq X$ is called an \emph{upper set} if $x\in U$ and $y\geq x$ imply $y\in U$. So, $X$, $C$ and translations thereof are all upper sets in $(X,C)$. Let $U\subseteq X$ be an upper set. A map $f\colon U\to Y$ is called \emph{affine} or \emph{linear} if it is the restriction of an affine map $F\colon \mathrm{aff}(U)\to Y$ or a linear map $F\colon \mathrm{span}(U)\to Y$, respectively. If $C$ is generating then we have $\mathrm{aff}(U)=\mathrm{span}(U)=X$. A map $f\colon U\to Y$ is affine if and only if $f(\lambda_1x_1+\cdots+\lambda_nx_n)=\lambda_1f(x_1)+\cdots+\lambda_nf(x_n)$ for all $x_1,\ldots,x_n\in U$ and $\lambda_1,\ldots,\lambda_n\in\mathbb{R}$ with  $\lambda_1+\cdots+\lambda_n=1$ such that $\lambda_1x_1+\cdots+\lambda_nx_n\in U$. It is a well-known fact that, if the upper set $U$ is convex, then $f\colon U\to Y$ is affine if and only if $f$ is \emph{convex-linear}, that is, for each $x,y\in U$ and $0\leq \lambda\leq 1$ we have that $f(\lambda x+(1-\lambda) y) = \lambda f(x)+(1-\lambda) f(y)$.

An element $u$ in a partially ordered vector space $(X,C)$ is an \emph{order unit} if for all $x\in X$ there exists a $\lambda \geq0$ such that $-\lambda u\leq x\leq\lambda u$. If $C$ is generating, then $u\in C$ is an order unit if and only if for every $x\in C$ there exists $\lambda\ge 0$ with $x\le\lambda u$. If $(X,C)$ is Archimedean and $u\in C$ is an order unit then the formula \[
\|x\|_u:=\inf\{\lambda \geq0\colon -\lambda u\leq x \leq \lambda u\}\]
defines a norm on $X$, called the \emph{order unit norm}. A triple $(X,C,u)$, where $(X,C)$ is an Archimedean partially ordered vector space and $u$ is an order unit in $(X,C)$, is called an \emph{order unit space}. In an order unit space we denote the interior of the cone $C$ with respect to the order unit norm by $C^\circ$. The set $C^\circ$ is an upper set and consists of all order units of $(X,C)$.

\section{Linearity of order-isomorphisms}\label{section3}
In the sequel $(X,C)$ and $(Y,K)$ will be Archimedean partially ordered vector spaces.
Initially we only consider order-isomorphisms $f\colon [a,\infty)\to [b,\infty)$, where $a\in X$ and $b\in Y$.
However, the  main result, Theorem \ref{thrm,ext},  holds for more general domains.

A key role in the analysis of  order-isomorphisms is played by extreme half-lines. This idea has been exploited to analyse order-isomorphisms on finite dimensional partially ordered vector spaces \cite{AAS} as well as in infinite dimensions in \cite{NS2}.  In infinite dimensions, however, the extreme half-lines are not as useful, as there are cones that have none or only very few extreme rays.  
The following order theoretic characterization of extreme half-lines  is due to Noll and Sch\"affer, see \cite[Proposition 1]{NS2}. For completeness we provide a proof. 
\begin{proposition}\label{prop,total} If  $(X,C)$ is Archimedean and $x\in X$, then $H\subseteq [x,\infty)$ is an extreme half-line with apex $x$ if and only if $H$ is maximal among subsets $G\subseteq [x,\infty)$ with $x\in G$ that satisfy:
\begin{enumerate}[(P1)]
\item $G$ is directed.
\item For any $y\in G$ the order interval $[x,y]$ is totally ordered.
\item $G$ contains at least two distinct points.
\end{enumerate}\end{proposition}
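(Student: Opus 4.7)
The plan is to verify both implications separately, using Lemma \ref{lem,arch} as the main tool for upgrading scalar-wise comparability into co-linearity on a single ray.

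For the ``only if'' direction, let $H = x + R_e$ with $e$ an extreme vector. Then $H$ is totally ordered, so (P1) and (P3) hold trivially, and for (P2) each interval $[x, x+\lambda e]$ equals $x + [0,\lambda e]$, which is totally ordered by the extremality of $e$. For maximality, suppose $G \supseteq H$ satisfies (P1)--(P3) and take any $w = x + v \in G$. For each $\lambda > 0$, (P1) yields $z \in G$ with $z \geq w$ and $z \geq x + \lambda e$, so (P2) applied to $[x,z]$ makes $v$ and $\lambda e$ comparable. If $\lambda e \leq v$ held for every $\lambda$, then $ne \leq v$ for all $n \in \N$, so the Archimedean property would force $e \leq 0$, contradicting $e \in C\setminus\{0\}$. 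Hence $v \leq \lambda_0 e$ for some $\lambda_0 > 0$, and extremality of $e$ then gives $v \in R_e$, so $w \in H$.

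For the ``if'' direction, assume $H$ is maximal among subsets of $[x,\infty)$ containing $x$ that satisfy (P1)--(P3). By (P3) pick $y_0 = x + e \in H$ with $e \neq 0$. I would first verify that $e$ is extreme: by (P2) the interval $[x, y_0] = x + [0,e]$ is totally ordered, so for each $u \in [0,e]$ the elements $u$ and $\lambda e$ are comparable for every $\lambda \in [0,1]$, and Lemma \ref{lem,arch} yields $u \in R_e$. Next I would show $H \subseteq x + R_e$: for any $w = x + v \in H$, use (P1) to find $z \in H$ with $z \geq w$ and $z \geq y_0$; then (P2) for $[x,z]$ forces $v$ and $e$ to be comparable. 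If $v \leq e$, then $v \in [0,e] \subseteq R_e$ by the preceding step; if instead $e \leq v$, then every $x + \lambda v$ with $\lambda \in [0,1]$ lies in $[x, w] \subseteq [x, z]$, hence is comparable with $x + e$, and Lemma \ref{lem,arch} then yields $e = \mu v$ for some $\mu > 0$, so again $v \in R_e$.

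Finally, by the ``only if'' direction $x + R_e$ itself satisfies (P1)--(P3), and it contains $H$, so maximality of $H$ forces $H = x + R_e$, an extreme half-line with apex $x$. The delicate point of the argument is the Archimedean step in the maximality part of the ``only if'' direction, which prevents an extraneous element of $G$ from sitting above every positive multiple of $e$ while remaining consistent with (P1) and (P2); everywhere else the argument is a careful bookkeeping of which pairs of points must be forced into a common order interval so that (P2) can do its work.
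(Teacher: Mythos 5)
Your proof is correct and follows essentially the same route as the paper: both directions hinge on taking a common upper bound $z$ via (P1), exploiting that $[x,z]$ (hence $[0,z-x]$) is totally ordered by (P2), and applying Lemma \ref{lem,arch} to turn comparability with all multiples into colinearity. The only minor variations are organizational: in the maximality step you exclude the bad case by a direct Archimedean argument (choosing a fresh upper bound for each $\lambda$) where the paper instead splits into the cases $y-x\le r$ and $r\le y-x$ and applies extremality respectively Lemma \ref{lem,arch}, and in the other direction you extract extremality of $e$ immediately from (P2) on $[x,x+e]$ and then reuse the first direction, rather than first showing $H$ lies on a half-line.
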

\begin{proof} Suppose $H\subseteq X$ is maximal among subsets $G\subseteq [0,\infty)$ that satisfy properties (P1)--(P3).
We first argue that $H$ is contained in a half-line.
Let $y,w\in H$ be given, so $x\leq y,w$.
Due to (P1) there exists a $z\in H$ such that $ y,w\leq z$.
Since $\leq$ is preserved under addition, (P2) guarantees that the order interval $[0,z-x]$ is totally ordered.
Moreover, it contains $y-x$, $w-x$, and $\lambda(z-x)$ for all $0\leq \lambda\leq 1$.
Therefore, by Lemma \ref{lem,arch} there exist $\alpha,\beta\geq 0$ such that $y-x=\alpha(z-x)$ and $w-x=\beta(z-x)$.
This shows that $y$ and $w$ are on the half-line through $z$ with apex $x$.  We conclude that any pair of points in $H$ lie on a half-line with apex $x$, and hence $H$ is contained in a half-line with apex $x$. Let $R$ be a ray in $C$ such that $H\subseteq x+R$. 

By (P3) there exists an $r\in C\setminus\{0\}$ such that $x+r\in H$ and $x+R = \{x+\lambda r\colon \lambda \geq 0\}$. Note that $x+R$ satisfies properties (P1) and (P3).
We now show  that $x+R$ also satisfies (P2).
Consider $y=x+\lambda r$ with $\lambda>0$. Then $[x,y]=[x,x+\lambda r]$ equals the interval $[x,r]$ up to dilation.
We know that $[x,x+r]$ is totally ordered, as $x+r\in H$ and $H$ satisfies property (P2). Hence $[x,y]$ is also totally ordered.
It now follows from the maximality assumption on $H$ that $H=x+R$.

To see that $x+R$ is an extreme half-line, we note that $[0,r]$ is totally ordered, as $[x,x+r]$ is totally ordered. It follows from Lemma \ref{lem,arch} that $r$ is an extreme vector. 
 
Conversely, suppose $H=x+R$ is an extreme half-line. Clearly, $H$ satisfies properties (P1)--(P3).
Suppose $G\supseteq H$ also satisfies (P1)--(P3) and $y\in G$.
Since $G$ is directed, there exists a $z\in G$ with $z\geq y,x+r$.
Moreover, $[x,z]$ is totally ordered by (P2) and, hence, $[0,z-x]$ is totally ordered and $y-x,r\in [0,z-x]$. If $y-x\le r$, then there is a $\mu\ge 0$ such that $y-x=\mu r$, as $r$ is extreme, so that $y=x+\mu r\in H$. Otherwise, we have $r\le y-x$ and for each $0\le\lambda\le 1$ we have $\lambda(y-x)\in [0,z-x]$, so $r\le \lambda(y-x)$ or $\lambda(y-x)\le r$. By Lemma  \ref{lem,arch} it follows that there is a $\sigma\ge 0$ such that $r=\sigma(y-x)$. Then $\sigma\neq 0$ and $y=x+\sigma^{-1}r\in H$.
\end{proof}
We note  that property (P3) is only a necessary condition if $C$ does not have any extreme rays and can be dropped otherwise.

As a direct corollary we obtain the following result.
\begin{corollary}\label{thrm,ehl} If $f\colon [a,\infty)\to [b,\infty)$ is an order-isomorphism, then $f$ maps an extreme half-line with apex $x \in [a,\infty)$ onto an extreme half-line with apex $f(x)\in [b,\infty)$.
\end{corollary}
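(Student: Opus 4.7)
The plan is to invoke Proposition~\ref{prop,total} and observe that every ingredient in the characterisation there is purely order-theoretic, hence invariant under an order-isomorphism. So the whole proof reduces to checking that properties (P1)--(P3) and the maximality condition transfer across $f$.

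First I would reduce to the right local picture. Since $x\in [a,\infty)$ and $f$ is an order-isomorphism from $[a,\infty)$ onto $[b,\infty)$, the restriction of $f$ yields an order-isomorphism from $[x,\infty)$ onto $[f(x),\infty)$: indeed $y\ge x$ iff $f(y)\ge f(x)$, and $[x,\infty)\subseteq [a,\infty)$ because $x\ge a$. This reduction is crucial because Proposition~\ref{prop,total} characterises extreme half-lines with apex $x$ precisely as certain subsets of $[x,\infty)$ containing $x$.

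Next I would let $H=x+R\subseteq [x,\infty)$ be an extreme half-line with apex $x$, and set $H':=f(H)\subseteq [f(x),\infty)$. By Proposition~\ref{prop,total}, $H$ is maximal in $[x,\infty)$ among subsets containing $x$ that satisfy (P1), (P2), (P3). I would then verify that $H'$ inherits these three properties from $H$: directedness (P1) is preserved because $f$ and $f^{-1}$ both preserve order, so any upper bound of $y,w\in H$ in $H$ is sent to an upper bound of $f(y),f(w)$ in $H'$; property (P2) is preserved because order-intervals are mapped to order-intervals, and total order of $[x,y]$ transfers to total order of $[f(x),f(y)]$; and (P3) is preserved because $f$ is injective.

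For maximality, I would argue contrapositively: suppose some $G'\supsetneq H'$ in $[f(x),\infty)$ (with $f(x)\in G'$) still satisfied (P1)--(P3). Then $G:=f^{-1}(G')\subseteq [x,\infty)$ would contain $x$, strictly contain $H$, and by the same transfer argument (now applied to $f^{-1}$) would satisfy (P1)--(P3), contradicting maximality of $H$. Hence $H'$ is maximal, so by Proposition~\ref{prop,total} (the ``only if'' direction), $H'$ is an extreme half-line with apex $f(x)$.

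There is essentially no obstacle here; the only minor point to be careful about is that the characterisation of Proposition~\ref{prop,total} pins down both the extreme ray and its apex, so we must ensure $f(x)\in H'$ plays the role of the apex, which is automatic since $x\in H$ and $f$ maps $x$ to $f(x)$. Applying the same argument to $f^{-1}$ gives the reverse inclusion, so $f$ maps extreme half-lines with apex $x$ bijectively onto extreme half-lines with apex $f(x)$.
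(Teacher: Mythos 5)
Your proof is correct and follows essentially the same route as the paper: the paper's own (very short) proof also observes that $f(x+R)\subseteq[f(x),\infty)$ satisfies (P1)--(P3) because $f$ is an order-isomorphism and then invokes Proposition~\ref{prop,total}. Your version merely makes explicit the maximality transfer via $f^{-1}$ (and the restriction to $[x,\infty)\to[f(x),\infty)$), which the paper leaves implicit.
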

\begin{proof}
Suppose that $R$ is an extreme ray of $C$. Then $f(x+R)\subseteq [f(x),\infty)$ and satisfies properties (P1)--(P3), as $f$ is an order-isomorphism. So by Proposition \ref{prop,total} we find that $f(x+R) =f(x)+S$, where $S$ is an extreme ray of $K$. 
\end{proof}

Our next step is to show that order-isomorphisms $f\colon [a,\infty)\to [b,\infty)$ possess an  additive property on extreme half-lines, which was proved in \cite[Lemma 1]{NS2}. For the reader's convenience we include the proof.
\begin{lemma}\label{lem,add} Let $R$ and $S$ be distinct extreme rays of $C$ and $f\colon [a,\infty)\to [b,\infty)$ be an order-isomorphism. For each  $x\in [a,\infty)$, $r\in R$ and $s\in S$ we have that
\begin{equation} \label{eq:par} 
f(x+r+s)-f(x+s)=f(x+r)-f(x).
\end{equation}
\end{lemma}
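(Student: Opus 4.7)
The plan is to apply Corollary~\ref{thrm,ehl} to the four extreme half-lines $x+R$, $x+S$, $x+s+R$, and $x+r+S$ inside $[a,\infty)$, and then use the linear independence of vectors on distinct extreme rays (Lemma~\ref{lem,cad}) to close the resulting ``parallelogram'' of images in $K$.

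Write the four images as $f(x)+R'$, $f(x)+S'$, $f(x+s)+R''$, and $f(x+r)+S''$, for extreme rays $R',S',R'',S''$ of $K$, and set $u=f(x+r)-f(x)\in R'$, $v=f(x+s)-f(x)\in S'$, $p=f(x+s+r)-f(x+s)\in R''$, and $q=f(x+r+s)-f(x+r)\in S''$. Each of these is nonzero by injectivity of $f$, and computing $f(x+r+s)-f(x)$ along the two natural paths yields
\[
u+q=v+p.
\]
Moreover, injectivity of $f$ together with $R\neq S$ implies $R'\neq S'$.

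The crux is to show $R''=R'$ and $S''=S'$. The case $R''=S'$ is excluded by a short preimage argument: otherwise $f(x+s+R)=f(x+s)+S'\subseteq f(x)+S'=f(x+S)$, so $f^{-1}$ would give $x+s+R\subseteq x+S$, which fails since $s+r_0\notin S$ for any $r_0\in R\setminus\{0\}$ (as $r_0\notin S$). A symmetric argument gives $S''\neq R'$. If $R''$ were a third extreme ray distinct from $R'$ and $S'$, Lemma~\ref{lem,cad} would make the generators of $R',S',R''$ linearly independent; running through the possible locations of $S''$ (equal to one of $R',S',R''$, or a fourth distinct extreme ray) and substituting into $u+q=v+p$ in coordinates relative to these generators forces one of the strictly positive scalar coefficients of $u,v,p,q$ to vanish, a contradiction. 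Hence $R''=R'$, and symmetrically $S''=S'$.

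Once $R'=R''$ and $S'=S''$, write $p=\lambda u$ and $q=\mu v$ with $\lambda,\mu>0$. The relation $u+q=v+p$ becomes $(1-\lambda)u=(1-\mu)v$, and since $u\in R'$ and $v\in S'$ are nonzero vectors on distinct extreme rays they are linearly independent, so $\lambda=\mu=1$. Thus $p=u$, which is precisely \eqref{eq:par}. The main obstacle is the intermediate step, in particular the subcase where $S''$ is a fourth new extreme ray: Lemma~\ref{lem,cad} only furnishes linear independence of three generators from distinct rays at a time, so one must combine the lemma applied to two different triples (for instance $(R',S',R'')$ and $(S',R'',S'')$) together with the positivity of the four coefficients to derive the required contradiction.
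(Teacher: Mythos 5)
Your overall skeleton (reduce \eqref{eq:par} to showing that the image half-lines $f(x+s+R)$ and $f(x+r+S)$ are parallel to $f(x+R)$ and $f(x+S)$ respectively, then close the parallelogram from $u+q=v+p$ and the linear independence of $u,v$) is the same as the paper's, and your exclusions $R''\neq S'$ and $S''\neq R'$ via injectivity are fine. The genuine gap is exactly the subcase you flag as ``the main obstacle'': when $S''$ is a fourth extreme ray distinct from $R',S',R''$, no contradiction follows from the relation $u+q=v+p$, the positivity of the coefficients, and Lemma \ref{lem,cad} applied to triples. Lemma \ref{lem,cad} only gives linear independence of any three of the generators, and four extreme vectors on four distinct extreme rays can perfectly well satisfy such a relation: take $K$ the cone in $\mathbb{R}^3$ over a square, with $u=(1,1,1)$, $v=(1,-1,1)$, $p=(-1,1,1)$, $q=(-1,-1,1)$. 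Then $u+q=v+p$, the four rays are pairwise distinct, every triple is linearly independent (so consistent with Lemma \ref{lem,cad}), all coefficients are strictly positive, and the constraints $R''\neq S'$, $S''\neq R'$ hold. Hence nothing in the data you use ``forces a coefficient to vanish''; ruling out this skew-quadrilateral configuration is precisely the hard content of the lemma, and it cannot be done by pure algebra on the four edge vectors.

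The paper gets the extra leverage from more of the order-isomorphism than the four edges of one parallelogram: it considers the three parallel extreme half-lines $R_j=x+js+R$, $j\in\{0,1,2\}$ (note the third translate at $x+2s$), together with the whole one-parameter family of transversal extreme half-lines $x+S+\lambda r$, $\lambda\ge 0$, each of which meets all three $R_j$, so each image $f(x+S+\lambda r)$ is an extreme half-line meeting all three images $f(R_j)$. Assuming no two of the $f(R_j)$ are parallel, the direction vectors $w_0,w_1,w_2$ are linearly independent (this is where Lemma \ref{lem,cad} is used), the images lie in a three-dimensional affine span containing two parallel planes $W_0\supseteq f(R_0)$ and $W_2\supseteq f(R_2)$, and a sign analysis of the $v$-component of $w_1$ shows that the half-line $f(R_1)$ must meet $W_0$ or $W_2$; the transversality of the family $f(x+S+\lambda r)$ then forces $W_0\cap W_2\neq\emptyset$, a contradiction. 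A separate step upgrades ``two parallel'' to ``all three parallel''. If you want to complete your argument, you need an input of this kind (the third translate and the family of transversals); the two triples of Lemma \ref{lem,cad} you propose will not suffice.
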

\begin{proof} The equality in the statement holds trivially if either $r$ or $s$ equals zero.
Assume $r\neq 0$ and $s\neq 0$.
Then $R_j=x+js+R$ for $j\in\{0,1,2\}$ are three distinct parallel extreme half-lines.
Due to Corollary \ref{thrm,ehl}, their images $f(R_j)$ are extreme half-lines in $Y$ and they are distinct as $f$ is injective. For each $\lambda\ge 0$, the set $x+S+\lambda r$ is an extreme half-line that intersects $R_j$ for each $j\in \{0,1,2\}$, so, by Corollary \ref{thrm,ehl}, $f(x+S+\lambda r)$ is an extreme half-line  and 
\begin{equation}\label{eq.itintersects}
f(x+S+\lambda r) \mbox{ intersects }f(R_j) \mbox{ for each }j\in\{0,1,2\}\mbox{ and }\lambda\ge 0.
\end{equation} 
We obtain that $f(x+S+\lambda r)$ is not parallel to any of the $f(R_j)$, as $R$ and $S$ are distinct and $f$ is injective.

We aim to show that $f(R_0)$, $f(R_1)$, and $f(R_2)$ are parallel. We do so in two steps. As a first step we show that if two of them are parallel, then all three of them are parallel. Indeed, assume that $f(R_j)$ and $f(R_k)$ are parallel, with $j,k\in\{0,1,2\}$, $j\neq k$. Since $f(R_j)$ and $f(R_k)$ are distinct parallel half-lines, it follows from \eqref{eq.itintersects} that the half-line $f(x+S+\lambda r)$ is in their affine span for every $\lambda\ge 0$. Then the half-line $f(R_i)$ with $i\in \{0,1,2\}\setminus \{j,k\}$ is in that affine span, too, as it intersects $f(x+S+\lambda r)$ for two distinct values of $\lambda$. Thus, $f(x+S)$, $f(R_i)$, and $f(R_j)$  are three extreme half-lines in the affine plane spanned by $f(R_j)$ and $f(R_k)$. By Lemma \ref{lem,cad}, it follows that at least two of the half-lines $f(x+S)$, $f(R_i)$, and $f(R_j)$ must be parallel, which yields that $f(R_i)$ and $f(R_j)$ must be parallel. Thus, $f(R_i)$, $f(R_j)$, and $f(R_k)$ are parallel.

As a second step we argue by contradiction that at least two of the half-lines $f(R_0)$, $f(R_1)$, and $f(R_2)$ are parallel. For $i\in\{0,1,2\}$, take $w_i\in Y$ such that 
\[f(R_i)=\{f(x+is)+\lambda w_i\colon\, \lambda\ge 0\}.\]
Suppose that no two of the three extreme half-lines $f(R_0)$, $f(R_1)$, and $f(R_2)$ are parallel. After translation they correspond to three distinct extremal rays, so that Lemma \ref{lem,cad} yields that $w_0$, $w_1$, and $w_2$ are linearly independent.
Define 
\begin{align*}
W_0&=f(x)+\operatorname{span}\{w_0,w_2\},\\  
W_2&=f(x+2s)+\operatorname{span}\{w_0,w_2\},\\ 
\ell_1&=\{f(x+s)+\lambda w_1\colon\, \lambda\in\mathbb{R}\}.
\end{align*}

\begin{center}
\begin{tikzpicture}
\draw [] (2.3,-2.2) -- (5.3,-0.7) -- (5.3,3.2) -- (2.3,2.2) -- (2.3,-2.2);
\draw [] (6.5,-2.2) -- (9.5,-0.7) -- (9.5,3.2) -- (6.5,2.2) -- (6.5,-2.2);
\draw [thick] (1.8,0) -- (2.3,0);
\draw [dashed] (2.35,0) -- (4,0);
\draw [thick] (4,0) -- (6.5,0);
\draw [dashed] (6.5,0) -- (8,0);
\draw [thick] (8,0) -- (10,0);
\draw node[right] at (10,0) {$f(x+S)$};
\draw node at (4,0) {\tiny{$\bullet$}};
\draw node[below] at (4,0) {$f(x)$};
\draw node at (5.5,0) {\tiny{$\bullet$}};
\draw node[below] at (5.5,0) {$f(x+s)$};
\draw node at (8,0) {\tiny{$\bullet$}};
\draw node[below] at (8,0) {$f(x+2s)$};
\draw [->] (4,0) -- (3.8,1.3);
\draw node[above] at (3.8,1.3) {$f(R_0)$};
\draw [->] (5.5,0) -- (5.8,1.5);
\draw node[above] at (5.8,1.5) {$f(R_1)$};
\draw [->] (8,0) -- (8.1,1.2);
\draw node[above] at (8.1,1.2) {$f(R_2)$};
\end{tikzpicture}
\end{center}
We observe that $W_0$ and $W_2$ are parallel and distinct planes. Moreover, $f(R_0)\subseteq W_0$, $f(R_2)\subseteq W_2$ and $f(R_1)\subseteq \ell_1$. The affine span $\operatorname{aff}(W_0,W_2)$ of $W_0$ and $W_2$ is three dimensional and contains $\ell_1$. Indeed, for every $z\in f(R_1)$ there is $\lambda\ge 0$ with $z=f(x+s+\lambda r)$, and by \eqref{eq.itintersects}, $\operatorname{aff}(W_0,W_2)$ contains the half-line $f(x+S+\lambda r)$. This shows that $f(R_1)\subseteq \operatorname{aff}(W_0,W_2)$, and hence $\ell_1\subseteq \operatorname{aff}(W_0,W_2)$. Since $w_1$ is linearly independent of $w_0$ and $w_2$, we conclude that $\ell_1$ intersects $W_0$ and $W_2$.

We proceed by showing that the half-line $f(R_1)$ intersects $W_0$ or $W_2$. Loosely speaking, the point $f(x+s)$ on $\ell_1$ lies between $W_0$ and $W_2$ and, therefore, the points where $\ell_1$ intersects $W_0$ and $W_2$ cannot be both at the same side of $f(x+s)$. To make this idea precise, let $v\in Y$ be such that
\[f(x+S)=\{f(x)+\lambda v\colon\, \lambda\ge 0\}.\]
Observe that $v\in K$, as $f(x+S)\subseteq [f(x),\infty)$. Then
\[\operatorname{aff}(W_0,W_2)=\{f(x+s)+\lambda w_0+\mu w_2+\sigma v\colon\, \lambda,\mu,\sigma\in \mathbb{R}\}.\]
As $f(x+s)+w_1\in f(R_1)\subseteq \operatorname{aff}(W_0,W_2)$, there are $\lambda,\mu,\sigma\in\mathbb{R}$ such that $w_1=\lambda w_0+\mu w_2+\sigma v$. By linear independence of $w_0$, $w_1$ and $w_2$, we have $\sigma\neq 0$. Consider the case $\sigma<0$. Then $f(R_1)$ intersects $W_0$, 
so there is a $t> 0$ such that $f(x+s+tr)\in W_0$. As $f(x+R)=f(R_0)\subseteq W_0$, it follows that the half-line $f(x+S+tr)$ contains two distinct points of $W_0$, so that $f(x+S+tr)\subseteq W_0$. Therefore $f(x+2s+tr)\in W_0\cap f(R_2)\subseteq W_0\cap W_2$, which is a contradiction. Otherwise, in case $\sigma>0$, then $f(R_1)$ intersects $W_2$, and we similarly arrive at a contradiction. Hence at least two of the half-lines $f(R_0)$, $f(R_1)$, and $f(R_2)$ are parallel, so by the first step all three of them are parallel.

Now we complete the proof. As $f(R_0)$ and $f(R_1)$ are parallel, we have that the vectors $f(x+r)-f(x)$ and $f(x+s+r)-f(x+s)$ have the same direction. By interchanging the roles of $R$ and $S$ we obtain that the vectors $f(x+s)-f(x)$ and $f(x+s+r)-f(x+r)$ have the same direction. Thus, $f(x)$, $f(x+r)$, $f(x+s+r)$, and $f(x+s)$ are the consecutive corners of a parallellogram, which concludes the proof.
\end{proof}

 It is interesting to note that the  proof of Lemma \ref{lem,add} does not work if the domain of the order-isomorphism is bounded. In fact, there exist examples of  order-isomorphisms on bounded order intervals for which  equation (\ref{eq:par}) does not hold, see for example \cite{Sem} where order-isomorphisms on order intervals in $B(H)_{\mathrm{sa}}$ are studied.

The following observation is a simple consequence of the previous  lemma.
\begin{corollary}\label{c:negative extreme vectors} Suppose $r,s\in X$ are extreme vectors with $r\neq \lambda s$ for all $\lambda\in\mathbb{R}$ and $f\colon [a,\infty) \to [b,\infty)$ is an order-isomorphism. If $x\in [a,\infty)$ is such that $x+r+s,x+r,x+s\in[a,\infty)$ then \[
f(x+r+s)-f(x+r)=f(x+s)-f(x).\]\end{corollary}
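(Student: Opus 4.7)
The statement we need to establish is $f(x+r+s)-f(x+r)=f(x+s)-f(x)$, which after rearrangement is the same parallelogram identity as \eqref{eq:par} in Lemma \ref{lem,add}. The distinction with Lemma \ref{lem,add} is that the extreme vectors $r,s$ are now allowed to be negative (i.e.\ to lie in $-C$), rather than being required to belong to $C$. My plan is a case analysis on the signs of $r$ and $s$, in each case translating the base point so as to reduce to the hypotheses of Lemma \ref{lem,add}.

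First, note that by definition an extreme vector of $(X,C)$ either lies in $C$ or in $-C$, and the hypothesis $r\neq\lambda s$ for all $\lambda\in\mathbb{R}$ guarantees that $r$ and $s$ are linearly independent, which in turn ensures that regardless of signs, the positive extreme vectors $\pm r$ and $\pm s$ generate two distinct extreme rays of $C$. So in each of the four sign cases the positive versions of our vectors satisfy the hypotheses of Lemma \ref{lem,add}.

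The four cases are handled by choosing a new base point $x'$ so that $x'$ and $x'$ plus positive extreme vectors give all four of $x,x+r,x+s,x+r+s$. Specifically:
\begin{itemize}
\item If $r,s\in C$, take $x'=x$ and apply Lemma \ref{lem,add} to $x'$ with positive extreme vectors $r,s$.
\item If $r\in -C$, $s\in C$, take $x'=x+r$ and apply Lemma \ref{lem,add} to $x'$ with positive extreme vectors $-r,s$, noting $x'+(-r)=x$, $x'+s=x+r+s$, and $x'+(-r)+s=x+s$.
\item If $r\in C$, $s\in -C$, take $x'=x+s$ and apply Lemma \ref{lem,add} with positive extreme vectors $r,-s$.
\item If $r,s\in -C$, take $x'=x+r+s$ and apply Lemma \ref{lem,add} with positive extreme vectors $-r,-s$.
\end{itemize}
In every case, the four points $x'$, $x'+(\pm r)$, $x'+(\pm s)$, $x'+(\pm r)+(\pm s)$ coincide (as an unordered set) with $x,x+r,x+s,x+r+s$, and so the hypothesis that these all lie in $[a,\infty)$ supplies exactly the containment needed to apply Lemma \ref{lem,add}.

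Applying Lemma \ref{lem,add} in each case yields the parallelogram identity for $f$ at the four points $f(x),f(x+r),f(x+s),f(x+r+s)$, which rearranges to the required equality $f(x+r+s)-f(x+r)=f(x+s)-f(x)$. There is no real obstacle here beyond bookkeeping; the only thing to be a little careful with is verifying that the correspondences $x'+(\pm r)+(\pm s)\leftrightarrow x+r+s$ etc.\ are consistent in each of the four cases so that the containment hypothesis transfers correctly.
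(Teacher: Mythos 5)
Your proof is correct and follows essentially the same route as the paper: the paper likewise reduces to Lemma \ref{lem,add} by re-basing at $x+r+s$ (writing $y=x+r+s$ in the case $r,s\le 0$) and leaves the mixed-sign cases to the reader, exactly the translations you spell out. Your explicit check that $r\neq\lambda s$ forces the positive representatives to span distinct extreme rays, and that the new base point lies in $[a,\infty)$ in each case, is the same bookkeeping implicit in the paper's argument.
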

\begin{proof} 
We only discuss the proof for the case $r\leq 0$ and $s\leq 0$, and leave the other two remaining cases to the reader, as they are proved in a similar way. By writing $y=x+r+s$, we get 
\[ f(x+r+s)-f(x+s) = f(y)-f(y-r) = f(y-s)-f(y-r-s) = f(x+r)-f(x) \] 
by Lemma \ref{lem,add}.
\end{proof}

Using this corollary we now show the following lemma.
\begin{lemma}\label{l:finite sum out} Let $f\colon [a,\infty)\to[b,\infty)$ be an order-isomorphism. Suppose $s_1,\ldots,s_n,r\in X$ are extreme vectors  such that $r\neq\lambda s_i$ for all $\lambda\in\mathbb{R}$ and $i=1,\ldots,n$. If $x,x+s_1+\cdots +s_n+r,x+s_1+\cdots +s_n,x+r\in [a,\infty)$, then \[
f\left(x+r+\sum_{i=1}^n s_i\right)-f\left(x+\sum_{i=1}^n s_i\right)=f(x+r)-f(x).\]\end{lemma}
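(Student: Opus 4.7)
I would proceed by induction on $n$. The base case $n=1$ is precisely Corollary~\ref{c:negative extreme vectors}. For the inductive step, fix $n\geq 2$, assume the lemma for $n-1$, and write $T=s_1+\cdots+s_n$. The strategy is to peel off one summand $s_j$ by a single application of Corollary~\ref{c:negative extreme vectors} at a shifted base, and then invoke the inductive hypothesis back at the original base $x$.

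The key combinatorial step is to select an index $j\in\{1,\ldots,n\}$ for which both $x+T-s_j$ and $x+r+T-s_j$ lie in $[a,\infty)$. This is done by a two-case argument: if some $s_j\le 0$, then $-s_j\in C$, so adding $-s_j$ to the two points $x+T$ and $x+r+T$ (which lie in $[a,\infty)$ by hypothesis) keeps us inside the upper set; otherwise every $s_i\in C$, and then $T-s_j\in C$ for any $j$, whence $x+T-s_j\ge x\ge a$ and $x+r+T-s_j\ge x+r\ge a$ using $x+r\in[a,\infty)$.

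Having fixed such a $j$, set $z=x+T-s_j$. The four corners $z,\,z+s_j=x+T,\,z+r,\,z+s_j+r=x+T+r$ all lie in $[a,\infty)$, and $r\neq \lambda s_j$ for all $\lambda\in\mathbb{R}$ by hypothesis, so Corollary~\ref{c:negative extreme vectors} applied to $z,\,s_j,\,r$ gives
\[f(x+T+r)-f(x+T)=f(x+T-s_j+r)-f(x+T-s_j).\]
Applying the inductive hypothesis to the $n-1$ extreme vectors $\{s_i:i\neq j\}$, the vector $r$, and the base point $x$, whose four required corners $x,\,x+r,\,x+(T-s_j),\,x+r+(T-s_j)$ lie in $[a,\infty)$ by the choice of $j$, yields
\[f(x+r+(T-s_j))-f(x+(T-s_j))=f(x+r)-f(x).\]
Concatenating these two equalities closes the induction.

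The only genuine obstacle is the selection of $j$; the naive attempt to peel off $s_n$ at the base $x+s_1+\cdots+s_{n-1}$ fails because that point need not lie in $[a,\infty)$, and shifting the base from $x$ to $z=x+T-s_j$ is precisely what sidesteps this difficulty.
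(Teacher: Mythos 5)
Your proof is correct, and it is essentially the paper's argument: the paper relabels the $s_i$ so that the positive ones come first and the negative ones last, then peels off one summand at a time with Corollary~\ref{c:negative extreme vectors}, which is the same sign-based device you use to choose which $s_j$ to peel so that the intermediate points stay in $[a,\infty)$. Your formulation as an induction with an explicit selection of $j$ is just a repackaging of the paper's relabel-and-telescope presentation.
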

\begin{proof}
By relabelling we may assume that there exists $k\in\{0,\ldots,n\}$ such that $s_i>0$ for all $i\leq k$ and $s_i<0$ for all $i>k$. Then $x+r+\sum_{i=1}^m s_i\in [a,\infty)$ and $x+\sum_{i=1}^m s_i \in[a,\infty)$ for $m=1,\ldots,n$. By Corollary \ref{c:negative extreme vectors} we have 
\[f\left((x+\sum_{i=1}^{n-1} s_i)+s_n+r\right)-f\left((x+\sum_{i=1}^{n-1} s_i)+s_n\right)=f\left(x+\sum_{i=1}^{n-1} s_i+r\right)-f\left(x+\sum_{i=1}^{n-1} s_i\right).\]
Repeating this argument yields the desired conclusion.
\end{proof}
We can use Lemma \ref{l:finite sum out} to get the following identity.
\begin{lemma}\label{kerstboom} Let $f\colon [a,\infty)\to[b,\infty)$ be an order-isomorphism. Suppose $x\in [a,\infty)$ and $s_1,\ldots,s_n$ are extreme vectors in $X$ such that $s_i\neq \lambda s_j$ for all $\lambda \in\mathbb{R}$ and $i\neq j$, $x+s_1+\cdots+s_n\in [a,\infty)$, and $x+s_i\in [a,\infty)$ for all $i=1,\ldots,n$, then 
\[
f\left(x+\sum_{i=1}^n s_i\right)-f(x)=\sum_{i=1}^n \left(f(x+s_i)-f(x)\right).
\]
\end{lemma}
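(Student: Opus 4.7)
The plan is to proceed by induction on $n$. The case $n=1$ is trivial, and the case $n=2$ reduces, after rearranging, to $f(x+s_1+s_2)-f(x+s_1)=f(x+s_2)-f(x)$, which is precisely Corollary \ref{c:negative extreme vectors} applied to $r=s_1$ and $s=s_2$ (noting that $s_1\ne\lambda s_2$ by hypothesis and that the three translates of $x$ all lie in $[a,\infty)$).

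For the inductive step, I would first relabel so that $s_1,\dots,s_k>0$ and $s_{k+1},\dots,s_n<0$ for some $0\le k\le n$ (possible since each extreme vector is comparable with $0$). This relabelling has the important consequence that every partial sum $x+\sum_{i=1}^m s_i$ with $1\le m\le n$ belongs to $[a,\infty)$: for $m\le k$ the partial sum dominates $x$, while for $m\ge k$ it dominates $x+\sum_{i=1}^n s_i$, both of which lie in $[a,\infty)$. Having secured this, I would apply Lemma \ref{l:finite sum out} with vectors $s_1,\dots,s_{n-1}$ and $r:=s_n$ (which is non-parallel to each $s_i$ for $i<n$ by assumption). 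The hypotheses of that lemma are met precisely by the four memberships $x,\,x+s_n,\,x+\sum_{i=1}^{n-1}s_i,\,x+\sum_{i=1}^n s_i\in[a,\infty)$ just verified, yielding
\[
f\!\left(x+\sum_{i=1}^n s_i\right)-f\!\left(x+\sum_{i=1}^{n-1} s_i\right)=f(x+s_n)-f(x).
\]

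Next I would apply the induction hypothesis to $s_1,\dots,s_{n-1}$ at the base point $x$ (its hypotheses are inherited from ours, since each $x+s_i\in[a,\infty)$ was assumed and $x+\sum_{i=1}^{n-1}s_i\in[a,\infty)$ was just established) to obtain
\[
f\!\left(x+\sum_{i=1}^{n-1} s_i\right)-f(x)=\sum_{i=1}^{n-1}\bigl(f(x+s_i)-f(x)\bigr),
\]
and then add the two displayed equalities to conclude.

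The only delicate point I expect is the bookkeeping of membership in $[a,\infty)$; the relabelling into positives followed by negatives is the same device used inside the proof of Lemma \ref{l:finite sum out}, and it is what allows both the telescoping step and the recursive application of the induction hypothesis to take place within the domain of $f$.
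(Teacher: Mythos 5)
Your proposal is correct and follows essentially the same route as the paper: the paper also relabels so the positive extreme vectors come first, observes that all partial sums $x+\sum_{i=1}^m s_i$ then lie in $[a,\infty)$, and evaluates the telescoping sum via Lemma \ref{l:finite sum out}. Your induction simply unrolls that telescoping argument one step at a time, with the $n=2$ base case being the special case of Lemma \ref{l:finite sum out} given by Corollary \ref{c:negative extreme vectors}.
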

\begin{proof}
By relabelling we may assume that there exists $k\in\{0,\ldots,n\}$ such that $s_i>0$ for all $i\leq k$ and $s_i<0$ for all $i>k$. Then $x+\sum_{i=1}^m s_i \in[a,\infty)$ for $m=1,\ldots,n$.  
Using  a telesoping sum and Lemma \ref{l:finite sum out} we obtain 
\[ f\left(x+\sum_{i=1}^n s_i\right)-f(x)  =  f\left( x +\sum_{i=1}^n s_i\right) -  f\left( x +\sum_{i=1}^{n-1}  s_i \right)   + \cdots +  f(x+s_1) - f(x) 
 =  \sum_{m=1}^n (f(x+s_m)-f(x)).
  \]
\end{proof}

Let $\mathcal{R}$ denote the collection of all extreme rays in $C$, and define
\[
[a,\infty)_\mathcal{R} =\{a+r_1+\cdots+r_n\in [a,\infty) \colon r_i\in C \text{ an extreme vector for } i=1,\ldots,n\}.\]

\begin{lemma}\label{l:invariant r} Let $f\colon [a,\infty)\to[b,\infty)$ be an order-isomorphism and $x,y\in[a,\infty)_{\mathcal{R}}$. Suppose that  $y-x=s_1+\cdots+s_n$, where $s_i\in X$ is an extreme vector for $i=1,\ldots,n$. If $r\in X$ is an extreme vector with $r\neq\lambda s_i$ for all $\lambda\in\mathbb{R}$ and $i=1,\ldots,n$, and $x+r,y+r\in [a,\infty)$, then 
\[f(x+r)-f(x)=f(y+r)-f(y).\]
\end{lemma}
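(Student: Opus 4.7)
The statement follows almost directly from Lemma \ref{l:finite sum out}. The plan is to apply that lemma to the point $x$ together with the extreme vectors $s_1,\ldots,s_n$ and $r$, and then use the identification $y = x + s_1 + \cdots + s_n$ to rewrite the two sides.

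More precisely, first I would check that all hypotheses of Lemma \ref{l:finite sum out} are satisfied. The $s_i$ and $r$ are extreme vectors, and by assumption $r \neq \lambda s_i$ for all $\lambda\in\mathbb R$ and $i=1,\ldots,n$. We need the four points $x$, $x + s_1 + \cdots + s_n + r$, $x + s_1 + \cdots + s_n$, and $x + r$ to lie in $[a,\infty)$. Since $x \in [a,\infty)_{\mathcal R} \subseteq [a,\infty)$ and $y = x + s_1 + \cdots + s_n \in [a,\infty)_{\mathcal R} \subseteq [a,\infty)$, and since $x+r$ and $y+r = x + s_1 + \cdots + s_n + r$ lie in $[a,\infty)$ by hypothesis, all four membership conditions hold.

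Then Lemma \ref{l:finite sum out} applied with these data yields
\[
f\!\left(x + r + \sum_{i=1}^n s_i\right) - f\!\left(x + \sum_{i=1}^n s_i\right) = f(x+r) - f(x).
\]
Substituting $x + \sum_{i=1}^n s_i = y$ on the left-hand side gives $f(y+r) - f(y) = f(x+r) - f(x)$, which is the desired identity.

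There is no real obstacle here; the lemma is essentially a reformulation of Lemma \ref{l:finite sum out} that emphasises the translation-invariance of the difference $f(\cdot + r) - f(\cdot)$ along directions built from extreme vectors independent from $r$. The only point to be careful about is that the hypothesis $x,y \in [a,\infty)_{\mathcal R}$ is stronger than merely $x,y \in [a,\infty)$; this stronger hypothesis is exactly what gives $x + \sum_{i=1}^n s_i = y \in [a,\infty)$, which is the membership condition needed to invoke Lemma \ref{l:finite sum out}.
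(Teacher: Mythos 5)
Your proposal is correct and matches the paper's own proof, which likewise rewrites $f(y+r)-f(y)$ as $f(x+s_1+\cdots+s_n+r)-f(x+s_1+\cdots+s_n)$ and invokes Lemma \ref{l:finite sum out}; your explicit check of the four membership conditions is exactly the verification implicit there.
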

\begin{proof} Note that  \[f(y+r)-f(y) = f(x+(y-x)+r)-f(x+(y-x)) = f(x+s_1+\cdots+s_n+r)-f(x+s_1+\cdots+s_n) = f(x+r)-f(x)\] by Lemma \ref{l:finite sum out}.\end{proof}

In the setting of Lemma \ref{l:invariant r}, if $r=\lambda s_i$ for some $\lambda$ and $i$, and $r\in\mathrm{span} \{s\colon\, s\in S\text{ and } S\in\mathcal{R}\setminus \{R\}\}$ where $R=\{\lambda r\colon\, \lambda\ge 0\}$, then one could replace $s_i$ by a linear combination of extreme vectors not contained in $R\cup-R$ and thus obtain $y-x=s_1'+\cdots+s_m'$ with $r\neq \lambda s_j'$ for all $\lambda$ and $j$. Then the conclusion of Lemma \ref{l:invariant r} still holds. This motivates the following definition due to \cite{NS2} . 
\begin{definition}\label{defn,eng} Let $\mathcal{S}$ be a collection of rays in a cone $C$ in a vector space $X$. A ray $R\in\mathcal{S}$ is called {\it engaged (in $\mathcal{S}$)} whenever 
\[R\subseteq  \mathrm{span}(\mathcal{S}\setminus\{R\})=\mathrm{span}\{s\colon\, s\in S\text{ and } S\in\mathcal{S}\backslash \{R\}\}\] 
holds, and $R$ is called {\it disengaged (in  $\mathcal{S}$)} otherwise.\end{definition}
It can be shown that an extreme ray of a finite dimensional cone is disengaged (in the set of extreme rays) if and only if the cone equals the Cartesian product of the ray and another subcone. Cones that do not allow such a decomposition are considered in \cite{A}.

Recall that $\mathcal{R}$ denotes the collection of all extreme rays of $C$.
We denote the collection of all engaged extreme rays in $\mathcal{R}$ by $\mathcal{R}_E$ and the collection of all disengaged extreme rays in $\mathcal{R}$  by $\mathcal{R}_D$.
We remark that  being an engaged ray is relative to the collection it is viewed in. Nevertheless, we have that the elements of $\mathcal{R}_E$ are again engaged in $\mathcal{R}_E$. 
For simplicity we say that an extreme vector $r\in R\cup -R$ is {\em engaged} if $R\in\mathcal{R}_E$.

\begin{lemma}\label{lem,enginv} If $r\in X$ is an extreme vector, then the following assertions hold:
\begin{enumerate}[(i)]
\item $f(x+\lambda r)-f(x)$ is a scalar multiple of $f(x+r)-f(x)$ for every $x\in [a,\infty)$ and $\lambda\in\mathbb{R}$ such that $x+r,x+\lambda r\in [a,\infty)$;
\item If $r$ is engaged and $x,y,x+r,y+r\in [a,\infty)$ and $y-x\in\mathrm{span}\,\mathcal{R}$, then \[f(x+r)-f(x)=f(y+r)-f(y).\]
\end{enumerate}\end{lemma}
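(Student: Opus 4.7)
The plan is to treat the two parts separately. Part~(i) follows almost immediately from Corollary~\ref{thrm,ehl}, while part~(ii) reduces to Lemma~\ref{l:finite sum out} after using the engagement hypothesis to rewrite $y-x$.

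For part~(i), I assume $r\neq 0$ (otherwise the identity is trivial), and let $R$ denote the extreme ray of $C$ containing $r$ when $r\geq 0$, and containing $-r$ when $r\leq 0$. Since the upper set $[a,\infty)$ is convex, the set $T=\{\mu\in\mathbb{R}\colon x+\mu r\in [a,\infty)\}$ is an interval of $\mathbb{R}$ containing the three values $0$, $1$, and $\lambda$. I would then choose an apex $z=x+\mu_* r$ with $\mu_*\in T$ equal to $\min(0,1,\lambda)$ if $r\geq 0$, and to $\max(0,1,\lambda)$ if $r\leq 0$. With this choice each of $x$, $x+r$, and $x+\lambda r$ can be written as $z$ plus a nonnegative multiple of a fixed generator of $R$, so all three points lie on the extreme half-line $z+R\subseteq [a,\infty)$. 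Corollary~\ref{thrm,ehl} then yields that their images lie on a single extreme half-line $f(z)+S$ for some extreme ray $S$ of $K$. Since any three points on a ray are collinear, $f(x+\lambda r)-f(x)$ is a scalar multiple of $f(x+r)-f(x)$.

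For part~(ii), let $R$ denote the extreme ray through $r$ or $-r$, which is engaged by hypothesis. Because $y-x\in\mathrm{span}\,\mathcal{R}$, I would first write $y-x=s_1+\cdots+s_n$ where each $s_i$ is a nonzero extreme vector of $X$, using that nonzero scalar multiples of extreme vectors are extreme. Engagement of $R$ means $R\subseteq\mathrm{span}(\mathcal{R}\setminus\{R\})$, so there exist extreme vectors $\tilde t_1,\ldots,\tilde t_p$, none of them a scalar multiple of $r$, with $r=\tilde t_1+\cdots+\tilde t_p$. For every index $i$ for which $s_i=\mu_i r$ I substitute $s_i=\mu_i\tilde t_1+\cdots+\mu_i\tilde t_p$; each summand is again an extreme vector not parallel to $r$. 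After performing all such substitutions I arrive at a representation $y-x=s'_1+\cdots+s'_m$ in which no $s'_j$ is a scalar multiple of $r$. The four points $x$, $y=x+\sum_j s'_j$, $x+r$, and $y+r$ all lie in $[a,\infty)$ by hypothesis, so Lemma~\ref{l:finite sum out} applies and yields $f(y+r)-f(y)=f(x+r)-f(x)$.

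The main technical step will be in part~(ii): the engagement condition is precisely what permits the rewriting of $y-x$ so that the condition ``$r\neq\lambda s'_j$'' of Lemma~\ref{l:finite sum out} can be arranged, and one has to check that each replacement produces extreme vectors transverse to $r$. Part~(i) is straightforward once the apex is chosen with the sign of $r$ in mind.
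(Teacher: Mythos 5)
Your proof is correct and takes essentially the same route as the paper: part (i) is the extreme half-line argument via Corollary \ref{thrm,ehl} (with the apex chosen according to the sign of $r$ and $\lambda$), and part (ii) uses engagement to rewrite $y-x$ as a sum of extreme vectors not parallel to $r$ and then invokes Lemma \ref{l:finite sum out}, which is exactly the paper's argument (it cites Lemma \ref{l:invariant r}, itself an immediate consequence of Lemma \ref{l:finite sum out}). You simply spell out the details that the paper's two-line proof leaves implicit.
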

\begin{proof}
Assertion (i) follows from Corollary \ref{thrm,ehl}.
Remark that if $r$ is engaged then there exist extreme vectors $s_1,\ldots,s_n$ with $y-x=s_1+\cdots+s_n$ such that $r\neq\lambda s_i$ for all $\lambda\in\mathbb{R}$ and $i=1,\ldots,n$. So (ii) follows from Lemma \ref{l:invariant r}.
\end{proof}

The following result is an extension of \cite[Theorem A]{NS2}.  Recall that $\mathcal{R}_E$ denotes the collection of engaged extreme rays in $\mathcal{R}$. We define \[
[a,\infty)_{\mathcal{R}_E}=\{a+r_1+\cdots+r_n\in[a,\infty)\colon r_i\in C \text{ an engaged extreme vector for } i=1,\ldots,n\}.\]

\begin{theorem}\label{thrm,ns}  If $f\colon [a,\infty)\to [b,\infty)$ is an order-isomorphism, then $f$ is affine on $[a,\infty)_{\mathcal{R}_E}$.\end{theorem}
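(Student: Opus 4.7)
\medskip
\noindent\textbf{Proof proposal.}
The plan is to construct an affine map $\tilde F\colon a+\mathrm{span}(\mathcal R_E)\to Y$ of the form $\tilde F(a+v)=f(a)+F(v)$ with $F$ linear, and to verify $\tilde F=f$ on $[a,\infty)_{\mathcal R_E}$. Lemma~\ref{kerstboom} already delivers the sum formula when the summands lie on distinct extreme rays, so the work concentrates on what happens along a single ray and on the well-definedness of $F$.

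First I would pin down $f$ along an engaged extreme ray. Fix $R\in\mathcal R_E$ with generator $r_R$ and set $\psi(t):=f(a+tr_R)-f(a)\in K$ for $t\ge 0$. Lemma~\ref{lem,enginv}(ii) with $x=a$, $y=a+sr_R$, and the engaged vector $tr_R$ (note $y-x=sr_R\in\mathrm{span}\,\mathcal R$) yields Cauchy additivity $\psi(s+t)=\psi(s)+\psi(t)$ for $s,t\ge 0$. Monotonicity of $\psi$ from order preservation gives $q\psi(1)\le\psi(t)\le p\psi(1)$ for rationals $q\le t\le p$; pinching with $p-q\le 1/N$ and invoking the Archimedean property of $(Y,K)$ forces $\psi(t)=t\psi(1)$ for every real $t\ge 0$. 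Combining this ray linearity with Lemma~\ref{kerstboom} upgrades the sum formula to
\[f(a+r_1+\cdots+r_n)-f(a)=\sum_{i=1}^n\bigl(f(a+r_i)-f(a)\bigr)\]
for any engaged extreme vectors $r_1,\ldots,r_n\in C$, with no restriction on shared rays: summands on the same ray are first merged into a single positive multiple of the generator and then split back via the ray linearity.

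Every $v\in\mathrm{span}(\mathcal R_E)$ splits according to sign as $v=r_1+\cdots+r_n-s_1-\cdots-s_m$ with each $r_i,s_j$ an engaged extreme vector in $C$; set
\[F(v):=\sum_{i=1}^n\bigl(f(a+r_i)-f(a)\bigr)-\sum_{j=1}^m\bigl(f(a+s_j)-f(a)\bigr).\]
Given a second such decomposition, rearranging to the identity $r_1+\cdots+r_n+s_1'+\cdots+s_{m'}'=r_1'+\cdots+r_{n'}'+s_1+\cdots+s_m$ and applying the general sum formula to $a$ plus this common element of $[a,\infty)_{\mathcal R_E}$ forces the two candidate values of $F(v)$ to coincide, so $F$ is well defined.

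Additivity of $F$ is immediate from concatenating decompositions, and $\mathbb R$-homogeneity reduces to the ray linearity of the second step, so $F$ is $\mathbb R$-linear on $\mathrm{span}(\mathcal R_E)$. Setting $\tilde F(a+v):=f(a)+F(v)$ then produces an affine map on $\mathrm{aff}([a,\infty)_{\mathcal R_E})=a+\mathrm{span}(\mathcal R_E)$ which, by the general sum formula, equals $f$ on $[a,\infty)_{\mathcal R_E}$. The hard part of the argument is the ray linearity: Lemma~\ref{kerstboom} is silent when summands share a ray, so this step must be obtained by a standalone Cauchy-additivity argument for $\psi\colon[0,\infty)\to K$ and an Archimedean squeeze against $\psi(1)$, with no ambient topology available to short-circuit the passage from $\mathbb Q$-linearity to $\mathbb R$-linearity.
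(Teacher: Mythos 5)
Your proposal is correct, and its skeleton is the same as the paper's: additivity along a single engaged extreme ray is extracted from Lemma~\ref{lem,enginv}(ii), additivity across distinct rays from Lemma~\ref{kerstboom}, and the two are combined into the sum formula on $[a,\infty)_{\mathcal{R}_E}$. You differ in two places in how the steps are closed out. For the ray step, the paper first uses Corollary~\ref{thrm,ehl} (via Lemma~\ref{lem,enginv}(i)) to write $f(x+\lambda r)-f(x)=g_r(\lambda)\bigl(f(x+r)-f(x)\bigr)$ with a scalar function $g_r$, proves $g_r$ is additive and monotone, and then invokes Darboux's theorem to get $g_r(\lambda)=\lambda$; you instead work directly with the vector-valued map $\psi(t)=f(a+tr_R)-f(a)$, obtain Cauchy additivity, and pass from $\mathbb{Q}$-homogeneity to $\psi(t)=t\psi(1)$ by a monotonicity-plus-Archimedean squeeze in $(Y,K)$ --- in effect reproving the needed instance of Darboux in vector form, which makes the argument self-contained (no appeal to the Acz\'el reference and no need for the half-line image to define a scalar). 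For the final step, the paper verifies convex-linearity of $f$ on the convex set $[a,\infty)_{\mathcal{R}_E}$ and relies on the stated equivalence between convex-linearity and affineness, whereas you explicitly build the linear map $F$ on $\operatorname{span}(\mathcal{R}_E)$, checking well-definedness by equating the two evaluations of the sum formula at a common point of $[a,\infty)_{\mathcal{R}_E}$, and then exhibit the affine extension $\tilde F$ on $a+\operatorname{span}(\mathcal{R}_E)$; this is slightly longer but produces the extension required by the paper's definition of ``affine on a set'' directly. Both routes use the same two key lemmas, so the trade-off is brevity (the paper) versus self-containedness and an explicit extension (your version).
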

\begin{proof}
Let $R$ be an engaged extreme ray of $C$ and fix $r\in R\backslash\{0\}$. Let $\lambda\in\mathbb{R}$ and take $x\in[a,\infty)_{\mathcal{R}}$ such that $x+\lambda r\geq a$. Then $x,x+r,x+\lambda r\in [a,\infty)$. So, by Lemma \ref{lem,enginv}(i), there exists a unique $g_{r,x}(\lambda)\in\mathbb{R}$ such that 
\begin{equation}\label{gr0}
f(x+\lambda r)-f(x)=g_{r,x}(\lambda)(f(x+r)-f(x)).
\end{equation}
As $r$ is engaged, it follows from Lemma \ref{lem,enginv}(ii) that $g_{r,x}(\lambda)$ does not depend on $x$. Thus there exists a unique function $g_r\colon\mathbb{R}\to\mathbb{R}$ such that for every $\lambda\in\mathbb{R}$ and $x\in[a,\infty)_{\mathcal{R}}$ with $x+\lambda r\geq a$ we have 
\begin{equation}\label{gr}
f(x+\lambda r)-f(x)=g_r(\lambda)(f(x+r)-f(x)).
\end{equation}
Clearly, $g_r(1)=1$ and $g_r$ is a monotone increasing function. For $\lambda,\mu\in\mathbb{R}$ there exists  an $x\in[a,\infty)_{\mathcal{R}}$ such that  $x+\lambda r\geq a$, $x+\mu r\geq a$, and $x+\lambda r+\mu r\geq a$. Moreover 
\begin{align*} g_r(\lambda+\mu)(f(x+r)-f(x)) &= f(x+(\lambda+\mu)r)-f(x) \\&= f(x+\lambda r+\mu r)-f(x+\lambda r)+f(x+\lambda r)-f(x) \\&= g_r(\mu)(f(x+\lambda r +r)-f(x+\lambda r))+g_r(\lambda)(f(x+r)-f(x)).\end{align*}
Since $r$ is engaged, Lemma \ref{lem,enginv}(ii) gives
\[
f(x+\lambda r+r)-f(x+\lambda r)=f(x+r)-f(x).
\]
Note that $f(x+r)-f(x)\neq 0$, as $r\neq 0$ and $f$ is injective, and hence  
\[
g_r(\lambda+\mu)=g_r(\lambda)+g_r(\mu).
\]
As $g_r$ is monotone increasing, additive, and $g_r(1)=1$, a result by Darboux (see \cite[Theorem 1 in Section 2.1]{Aczel}) yields that $g_r(\lambda)=\lambda$ for all $\lambda\in\mathbb{R}$.

To show that $f$ is affine it suffices to show that $f$ is convex-linear on $[a,\infty)_{\mathcal{R}_E}$, let $x,y\in [a,\infty)_{\mathcal{R}_E}$ and $0\leq t\leq 1$. Then $x=a+\sum_{i=1}^n \lambda_i r_i$ and $y=a+\sum_{i=1}^n \mu_i r_i$ where each $r_i\in C$ is an engaged extreme vector  and $r_i\neq \lambda r_j$ for all $\lambda\in\mathbb{R}$ and $i\neq j$. Moreover, $\lambda_i,\mu_i\geq 0$ and $\lambda_i+\mu_i\neq 0$ for all $i$. Put $s_i=(t\lambda_i+(1-t)\mu_i)r_i$. As $a+s_i\in [a,\infty)$ for all $i$, we can apply  Lemma \ref{kerstboom} to get 
\begin{align} 
f(tx+(1-t)y)-f(a) &= f\left(a+\sum_{i=1}^n s_i\right)-f(a)
                          = \sum_{i=1}^n ( f(a+s_i)-f(a))\nonumber\\
                          &= \sum_{i=1}^n (f(a+(t\lambda_i +(1-t)\mu_i)r_i)-f(a))
                          = \sum_{i=1}^n (t\lambda_i+(1-t)\mu_i)(f(a+r_i)-f(a))\nonumber\\
                          &= t\sum_{i=1}^n \lambda_i(f(a+r_i)-f(a))+(1-t)\sum_{i=1}^n\mu_i(f(a+r_i)-f(a))\nonumber\\
                          &= t\sum_{i=1}^n (f(a+\lambda_ir_i)-f(a))+(1-t)\sum_{i=1}^n (f(a+\mu_ir_i)-f(a))\nonumber\\
                          &= t(f\left(a+\sum_{i=1}^n \lambda_i r_i\right)-f(a))+(1-t)(f\left(a+\sum_{i=1}^n \mu_ir_i\right)-f(a)) \nonumber
                         \\&= tf(x)+(1-t)f(y)-f(a),\label{eq,star}\end{align}
where we have used \eqref{gr} and the fact that each $r_i$ is engaged in the forth and sixth equality, and Lemma \ref{kerstboom} in the seventh one. This completes the proof.
\end{proof}

\begin{remark}
It is interesting to note that in the proof of Theorem \ref{thrm,ns} we have only  used the assumption that  $r$ is an engaged extreme vector to show that the map $g_r\colon\mathbb{R}\to\mathbb{R}$ satisfying \eqref{gr0} is independent of $x$ and additive.  However,  if $r$ is a disengaged extreme vector, then  \eqref{gr0} still holds.  In Section 5 we will exploit this observation. Moreover, we remark that it is necessary to work with  the positive linear span of engaged extreme vectors, $[a,\infty)_{\mathcal{R}_E}$. Indeed, to apply Lemma \ref{kerstboom} we need   for each $i$  that $a+s_i$ is in the domain of $f$. 
\end{remark}

Let us now see how we can use Theorem \ref{thrm,ns} to generalise \cite[Theorem A]{NS2}. 
Suppose $V\subseteq X$ and $\sup V$ exists. If $f$ is an order-isomorphisms, then $f(\sup V)=\sup f(V)$. Likewise  order-isomorphisms preserve infima.  
These basic observations motivate the following definition. 
\begin{definition}\label{defn,supinf} Given $V\subseteq X$ the {\it inf-sup hull of} $V$ is the set $$\{x\in X\colon \mbox{ there exist } v_{\alpha,\beta}\in V\mbox{ for }\alpha \in A \mbox{ and } \beta \in B \mbox{ such that } x=\inf_{\alpha\in A} (\sup_{\beta\in B} v_{\alpha,\beta})\}, $$
where $A$ and $B$ are arbitrary index sets.
\end{definition}
Note that if $V\subseteq X$  and $x$ and $y$ are in the inf-sup hull of $V$, then $x=\inf_{\alpha\in A}(\sup_{\beta\in B} x_{\alpha,\beta})$ and $y=\inf_{\sigma \in S }(\sup_{\tau \in T} y_{\sigma,\tau})$, with all $x_{\alpha,\beta}$ and $y_{\sigma,\tau}$ in $V$, and hence for all  $\lambda,\mu\geq 0$ we have that 
\begin{align} \lambda x+\mu y &=\inf_{\alpha\in A}(\sup_{\beta\in B} \lambda x_{\alpha,\beta}) + \inf_{\sigma\in S}(\sup_{\tau\in T} \mu y_{\sigma,\tau})\nonumber
= \inf_{\alpha\in A}(\sup_{\beta\in B} \lambda x_{\alpha,\beta} + \inf_{\sigma\in S}(\sup_{\tau\in T} \mu y_{\sigma,\tau}))\nonumber\\
&=\inf_{\alpha\in A}(\inf_{\sigma\in S}(\sup_{\beta\in B}  \lambda x_{\alpha,\beta} +\sup_{\tau\in T}\mu y_{\sigma,\tau}))\nonumber
=\inf_{\alpha\in A}(\inf_{\sigma\in S}(\sup_{\beta\in B}(\sup_{\tau\in T} \lambda x_{\alpha,\beta} +\mu y_{\sigma,\tau})))\nonumber\\
&= \inf_{(\alpha,\sigma)\in A\times S} (\sup_{(\beta,\tau)\in B\times T} \lambda x_{\alpha,\beta} + \mu y_{\sigma,\tau}),\label{eq,infsup}
\end{align} 
which shows that $\lambda x+\mu y$ is also in the inf-sup hull. In particular we see that the inf-sup hull of a convex subset of $X$ is again a convex set.

\begin{lemma}\label{lem,extend}  
Let $f\colon [a,\infty) \to [b,\infty)$ be an order-isomorphism and let $D\subseteq [a,\infty)$ be convex. If $f$ is affine on $D$, then $f$ is affine on the inf-sup hull of $D$.
\end{lemma}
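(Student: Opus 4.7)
The plan is to show that $f$ is convex-linear on the inf-sup hull $\widetilde{D}$ of $D$; since $\widetilde{D}$ is convex by the computation in \eqref{eq,infsup} displayed just before the lemma, convex-linearity is equivalent to affinity in the paper's sense. The workhorse is the standard fact that every order-isomorphism preserves arbitrary existing suprema and infima: $f(\inf_\alpha\sup_\beta z_{\alpha,\beta}) = \inf_\alpha\sup_\beta f(z_{\alpha,\beta})$, and similarly for the inverse. Note also that $\widetilde{D}\subseteq [a,\infty)$, since $[a,\infty)$ being an upper set is closed under taking suprema and infima of its elements.

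Fix $x,y\in \widetilde{D}$ and $t\in[0,1]$. Choose representations $x=\inf_\alpha\sup_\beta x_{\alpha,\beta}$ and $y=\inf_{\alpha'}\sup_{\beta'} y_{\alpha',\beta'}$ with all inner elements in $D$. Repeating verbatim the manipulation of \eqref{eq,infsup} — non-negative scalars commute with $\sup$ and $\inf$, and the sum of two sups (respectively two infs) factors through the Cartesian product of the index sets — I rewrite
\[
tx+(1-t)y \;=\; \inf_{(\alpha,\alpha')}\sup_{(\beta,\beta')}\bigl(t\,x_{\alpha,\beta}+(1-t)\,y_{\alpha',\beta'}\bigr),
\]
and each inner element lies in $D$ by convexity of $D$.

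Now I apply $f$. Using preservation of suprema and infima and the hypothesis that $f$ is affine on $D$,
\[
f\bigl(tx+(1-t)y\bigr) \;=\; \inf_{(\alpha,\alpha')}\sup_{(\beta,\beta')}\bigl(t\,f(x_{\alpha,\beta}) + (1-t)\,f(y_{\alpha',\beta'})\bigr).
\]
On the other hand, $f(x)=\inf_\alpha\sup_\beta f(x_{\alpha,\beta})$ and $f(y)=\inf_{\alpha'}\sup_{\beta'} f(y_{\alpha',\beta'})$ by the same preservation property, and running the identical rearrangement on these two representations produces exactly the same double-indexed expression for $t\,f(x)+(1-t)\,f(y)$. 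The two therefore coincide, so $f$ is convex-linear on the convex set $\widetilde{D}$, and hence affine on it.

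The only real obstacle is a bookkeeping one: every $\sup$ and $\inf$ invoked after rearrangement must genuinely exist. This is automatic, because the order-isomorphism $f$ sends an existing supremum or infimum to one in the target space (and its inverse does the same), and because the inf-sup commutation identities we need are precisely those already validated in \eqref{eq,infsup}. No deeper input is required; the lemma is essentially an abstraction of the slogan that order-isomorphisms respect the lattice operations that happen to exist.
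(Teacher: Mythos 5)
Your proposal is correct and takes essentially the same route as the paper's proof: first use that an order-isomorphism sends existing suprema and infima in $[a,\infty)$ to suprema and infima in $[b,\infty)$ (a fact you cite and the paper verifies in its first paragraph), then use convexity of $D$ together with the rearrangement \eqref{eq,infsup} to write $tx+(1-t)y$ as an inf-sup of elements of $D$, apply affinity of $f$ on $D$, and reassemble to conclude convex-linearity on the convex inf-sup hull. The only difference is cosmetic: you collapse the indices into a single infimum and a single supremum over product index sets, while the paper keeps the nested $\inf\sup\inf\sup$ form.
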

\begin{proof} Suppose $V\subseteq [a,\infty)$ and $v\in [a,\infty)$ are such that $v=\sup(V)$. Then $f(v)$ is an upper bound of $f(V)$ in $[b,\infty)$.
Moreover, if $w\in [b,\infty)$ is another upper bound of $f(V)$, then $f^{-1}(w)$ is an upper bound of $V$, since $f^{-1}$ is order preserving.
As  $v=\sup(V)$ we deduce that $v\leq f^{-1}(w)$, so  that $f(v)\leq w$. This implies that  $f(v)=\sup(f(V))$ in $[b,\infty)$.
In the same way it can be shown that if  $W\subseteq [a,\infty)$ and $w\in [a,\infty)$ are such that $w=\inf(W)$, then $f(w)=\inf(f(W))$ in $[b,\infty)$.

To complete the proof it  suffices to show that $f$ is convex-linear on  the inf-sup hull of $D$. Indeed, the inf-sup hull of $D$ is a convex set by \eqref{eq,infsup}. Suppose that $x$ and $y$ are in the inf-sup hull of $D$ and $0\leq t\leq 1$. Write $x=\inf_\alpha\sup_\beta x_{\alpha,\beta}$ and $y =\inf_\sigma\sup_\tau y_{\sigma,\tau}$, with $x_{\alpha,\beta},y_{\sigma,\tau} \in D$ for all $\alpha,\beta,\sigma$ and $\tau$.

By repeatedly using the fact that $f$ preserves infima and suprema and Theorem \ref{thrm,ns} we get  
\begin{align*} f(tx+(1-t)y) &= \inf_{\alpha\in A}(\sup_{\beta\in B}(\inf_{\sigma\in S}(\sup_{\tau\in T} f(tx_{\alpha,\beta}+(1-t)y_{\sigma,\tau})))) \\& = \inf_{\alpha\in A}(\sup_{\beta\in B}(\inf_{\sigma\in S}(\sup_{\tau\in T} tf(x_{\alpha,\beta})+(1-t)f(y_{\sigma,\tau})))) \\ &= tf(\inf_{\alpha\in A}(\sup_{\beta\in B} x_{\alpha,\beta})) + (1-t)f(\inf_{\sigma\in S}(\sup_{\tau\in T} y_{\sigma,\tau})) = tf(x)+(1-t)f(y).\end{align*}
 \end{proof}

Combination of Theorem \ref{thrm,ns} and Lemma \ref{lem,extend} yields the next conclusion.

\begin{proposition}\label{prop,ns}  
Every  order-isomorphism $f\colon [a,\infty) \to [b,\infty)$ is affine on the inf-sup hull of $[a,\infty)_{\mathcal{R}_E}$.
\end{proposition}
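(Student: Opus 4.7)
The proof will be a short assembly of the two main results that have just been established. The plan is as follows. By Theorem \ref{thrm,ns}, the order-isomorphism $f$ is affine on the set $D := [a,\infty)_{\mathcal{R}_E}$. Lemma \ref{lem,extend} then upgrades this to affinity on the inf-sup hull of $D$, provided that $D$ is a convex subset of $[a,\infty)$. Thus the only thing that needs checking before stringing the two results together is the convexity of $[a,\infty)_{\mathcal{R}_E}$.

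For this verification I would take two elements $x = a + r_1 + \cdots + r_n$ and $y = a + s_1 + \cdots + s_m$ of $D$, with each $r_i$ and each $s_j$ an engaged extreme vector of $C$, and a scalar $t \in [0,1]$. Then
\[
tx + (1-t)y = a + \sum_{i=1}^n t\,r_i + \sum_{j=1}^m (1-t)\,s_j.
\]
Because the notion of an extreme vector (and of an engaged extreme ray) is preserved under multiplication by a positive scalar, each $tr_i$ and each $(1-t)s_j$ is either $0$ or again an engaged extreme vector; dropping any zero terms presents $tx + (1-t)y$ in the required form, so it lies in $D$. Hence $D$ is convex.

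With the convexity of $D$ in hand, Lemma \ref{lem,extend} applies directly: since $f$ is affine on the convex set $D \subseteq [a,\infty)$, it is affine on the inf-sup hull of $D$, which is precisely the assertion of the proposition. I do not foresee any real obstacle here; the whole content of the statement has already been shouldered by Theorem \ref{thrm,ns} (the hard algebraic work on engaged extreme rays) and Lemma \ref{lem,extend} (the transfer from $D$ to its inf-sup hull via preservation of suprema and infima under order-isomorphisms).
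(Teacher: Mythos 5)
Your proposal is correct and is exactly the paper's argument: the paper states Proposition \ref{prop,ns} as an immediate combination of Theorem \ref{thrm,ns} and Lemma \ref{lem,extend}. Your explicit verification that $[a,\infty)_{\mathcal{R}_E}$ is convex (so that Lemma \ref{lem,extend} applies) is a correct and welcome detail that the paper leaves implicit.
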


We can now prove our main result Theorem \ref{thrm,ext}.
\begin{proof}[Proof of Theorem \ref{thrm,ext}] Let $a\in U$ be given. As $C$ is the inf-sup hull of $ [0,\infty)_{\mathcal{R}_E}$, we get that the interval $[a,\infty)$ equals the inf-sup hull of $[a,\infty)_{\mathcal{R}_E}$.
So it follows from Proposition \ref{prop,ns} that $f$ is affine on $[a,\infty)$.
As $X$ is directed the cone $C$ is generating, and hence $C-C = X$. This implies that there exists a unique  affine map $g:X\rightarrow Y$ such that $g$ restricted to $[a,\infty)$ coincides with $f$. 

In the same way we find that for  any $b\in U$ the map $f$ is affine on $[b,\infty)$.
Using that $C$ is directed, we know there exists $c\in U$ such that $c\geq a,b$.
We remark that the intersection $[a,\infty)\cap [b,\infty)$ contains the interval $[c,\infty)$.
Therefore, $f$ and $g$  coincide on $[b,\infty)$ for all $b\in U$. Since $U =\bigcup_{b\in U} [b,\infty)$,  we conclude that $g$ coincides with $f$ on $U$, which completes the proof. \end{proof}

Theorem \ref{thrm,ext} is a generalisation of  \cite[Theorem A]{NS2} by Noll and Sch\"{a}ffer. It would be  interesting to  have a complete characterisation of the  (infinite dimensional) directed Archimedean partially ordered vector spaces $(X,C)$ for which every  order-isomorphism $f\colon C\to C$ is linear. To our knowledge, Theorem \ref{thrm,ext} is the most general result at present. It can, however, not be applied in a variety of settings  such as the space $C([0,1])\oplus \mathbb{R}$ with cone $\{(f,\alpha)\colon \|f\|_\infty\leq \alpha\}$. In this space the cone has exactly two disengaged extreme rays: $\{\lambda(\mathds{1},1)\colon \lambda\geq 0\}$ and  $\{\lambda(-\mathds{1},1)\colon \lambda\geq 0\}$, where $\mathds{1}(x)=1$ for all $x\in [0,1]$, but it has no engaged extreme rays. We believe, however, that  each order-isomorphism on the cone is linear in this space. 

We end this section with a simple observation concerning direct sums. Let $(X_1,C_1)$ and $(X_2,C_2)$ be directed  Archimedean partially ordered vector spaces. Then the direct sum $X_1\oplus X_2$ is a directed  Archimedean partially ordered vector space with  cone $C_1\times C_2$. Moreover $(r,s)\in C_1\times C_2$ is an (engaged) extreme vector if and only if $r$ is an (engaged) extreme vector and $s=0$, or, $s$ is an (engaged) extreme vector and $r=0$. It is straightforward to infer that if $(X_1,C_1)$ and $(X_2,C_2)$ satisfy the conditions on $(X,C)$ in  Theorem \ref{thrm,ext}, then so does $(X_1\oplus X_2,C_1\times C_2)$.


\section{Self-adjoint operators on a Hilbert space}
Let $H$ be a Hilbert space and $B(H)_{\mathrm{sa}}$ be the space of bounded self-adjoint operators on $H$, ordered by the cone $B(H)_{\mathrm{sa}}^+$ of positive semi-definite operators.
In this section we show that $B(H)_{\mathrm{sa}}$ satisfies the conditions of Theorem \ref{thrm,ext}.

It is easy to show that the extreme rays of $B(H)_{\mathrm{sa}}^+$ are the rays spanned by rank-one  projections.
We will denote the collection of all extreme rays of $B(H)_{\mathrm{sa}}^+$ by $\mathcal{R}$.
Furthermore, for a closed subspace $V$ of $H$ we denote the orthogonal projection onto by $V$ by $P_V$, and for $x\in H$ we write $P_x=P_{\text{span}(\{x\})}$.

\begin{theorem}\label{thrm,bh}  If  $H$ is a Hilbert space, with $\dim H\geq 2$, and $U,W\subseteq B(H)_{\mathrm{sa}}$ are upper sets, then every  order-isomorphism $f\colon U\to W$  is affine.\end{theorem}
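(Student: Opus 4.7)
The strategy is to verify the hypotheses of Theorem \ref{thrm,ext} for $(X,C) = (B(H)_{\mathrm{sa}}, B(H)_{\mathrm{sa}}^+)$, from which the theorem follows immediately. Three things need to be checked: that $B(H)_{\mathrm{sa}}$ is directed and Archimedean (this is classical: the Jordan decomposition $A = A_+ - A_-$ from the spectral calculus shows the cone is generating, and the Archimedean property follows by testing on vectors $x \in H$); that every extreme ray of $B(H)_{\mathrm{sa}}^+$ is engaged, using $\dim H \geq 2$; and that $B(H)_{\mathrm{sa}}^+$ equals the inf-sup hull of $[0,\infty)_{\mathcal{R}_E}$.

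For the engagedness, every extreme ray has the form $R = \{\lambda P_x : \lambda \geq 0\}$ for a unit vector $x$. I would pick a unit vector $y \perp x$, which exists since $\dim H \geq 2$, and set $u = (x+y)/\sqrt{2}$ and $v = (x-y)/\sqrt{2}$. Since $\{u, v\}$ is an orthonormal basis of $\mathrm{span}\{x, y\}$, one has $P_u + P_v = P_x + P_y$, so
\[ P_x = P_u + P_v - P_y. \]
As $\mathbb{R} u$, $\mathbb{R} v$, $\mathbb{R} y$ are each distinct from $\mathbb{R} x$, this exhibits $R$ as lying in the span of other extreme rays, so $R$ is engaged.

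The heart of the argument is showing that every $A \in B(H)_{\mathrm{sa}}^+$ is an order-supremum of elements of $[0,\infty)_{\mathcal{R}_E}$, and hence lies trivially in the inf-sup hull (with the infimum over a singleton). I would use the spectral resolution $A = \int_0^{\|A\|} \lambda\, dE_\lambda$ to form dyadic step approximations $A_n = \sum_k c_k^n F_k^n$ with $c_k^n \geq 0$ and $F_k^n$ mutually orthogonal spectral projections such that $A_n$ increases to $A$ in the strong operator topology (SOT). Each $F_k^n$ is in turn an order-supremum of finite-rank projections contained in its range, and a finite-rank projection decomposes as a finite sum $\sum_i P_{e_i}$ of rank-one projections along an orthonormal basis of its range. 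Combining these two layers of approximation via the product ordering (dyadic level, tuple of finite-rank refinements of each $F_k^n$) yields an increasing net of elements of $[0,\infty)_{\mathcal{R}_E}$, bounded above by $A$, with SOT-limit $A$. By Vigier's theorem, order-suprema of bounded monotone nets in $B(H)_{\mathrm{sa}}^+$ coincide with their SOT-limits, so
\[ A = \sup\{ B \in [0,\infty)_{\mathcal{R}_E} : B \leq A\}. \]
The case $A = 0$ is handled by $0 = \inf_n (1/n) P_x$, a valid inf-sup representation since each $(1/n) P_x$ is an engaged extreme vector and the infimum exists by the Archimedean property. An application of Theorem \ref{thrm,ext} then delivers the result.

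The main technical obstacle is coordinating the spectral and finite-rank layers of approximation into a single monotone net whose order-supremum is $A$; the product-ordering device together with Vigier's theorem is precisely what makes this work, and it is the only place where non-trivial Hilbert space machinery enters the argument.
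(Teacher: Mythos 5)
Your proposal is correct in substance and reaches the statement by the same reduction to Theorem \ref{thrm,ext}, but the key step --- verifying that $B(H)_{\mathrm{sa}}^+$ is the inf-sup hull of the positive span of its (all engaged) extreme rays --- is done by a genuinely different argument. The paper avoids spectral approximation altogether: it first shows $I=\sup\{P_x\colon x\in H\}$ by a quadratic-form computation, transports this to an arbitrary \emph{invertible} $A$ via the linear order-isomorphism $T_A(Q)=A^{1/2}QA^{1/2}$, and then treats a general $A$ by writing $A=\inf\{A+I-P\colon P \mbox{ finite-rank projection},\ P\le I\}$, so the infimum layer of the hull is genuinely used. You instead prove the stronger assertion that every $A\ge 0$ is already the order-supremum of the positive finite-rank operators below it (with a separate infimum trick only for $A=0$), via dyadic spectral approximants, finite-rank subprojections and Vigier's theorem; this buys a cleaner conclusion (the cone is essentially the sup-hull of $[0,\infty)_{\mathcal{R}_E}$, so for instance Theorem \ref{thrm,diag} also becomes applicable) at the price of heavier machinery. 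One step you assert rather than prove is that the ``product ordering'' really makes $(n,(P_k)_k)\mapsto\sum_k c_k^n P_k$ monotone: a finite-rank subprojection of $F_k^n$ need not split along the refined spectral projections $F_{2k}^{n+1},F_{2k+1}^{n+1}$, so passing to a finer dyadic level requires exhibiting dominating subprojections, e.g.\ the projections onto $F_{2k}^{n+1}(\operatorname{ran}P_k)$ and $F_{2k+1}^{n+1}(\operatorname{ran}P_k)$, using that for orthogonal subspaces $W_0\perp W_1$ with $\operatorname{ran}P_k\subseteq W_0\oplus W_1$ one has $P_k\le P_{W_0}+P_{W_1}$. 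This is repairable, but you could bypass the two-layer bookkeeping (and the spectral theorem) entirely by taking the net $B_P=A^{1/2}PA^{1/2}$ indexed by finite-rank projections $P$, directed by $\le$: it is monotone for free, consists of positive finite-rank operators $\le A$, and $B_P\xi=A\xi$ eventually for each fixed $\xi$, so Vigier --- or the direct argument with $P$ the projection onto $\operatorname{span}\{A^{1/2}\xi\}$ --- gives $A=\sup_P B_P$; this is in effect the paper's $T_A$ device without the invertibility detour. Your treatment of directedness, the Archimedean property, engagedness (your $P_x=P_u+P_v-P_y$ is the same idea as the paper's $P_x=P_y+P_z-P_w$) is essentially identical to the paper's.
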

\begin{proof} 
We verify that $B(H)_{\mathrm{sa}}$ satisfies the conditions of Theorem \ref{thrm,ext}. Evidently, $B(H)_{\mathrm{sa}}$ is directed and Archimedean.
We first show that all extreme rays of $B(H)_{\mathrm{sa}}^+$ are engaged. So, suppose $P\in \mathcal{R}$. Then there exists an $x\in H$ such that $P=P_x$.
As $\dim H\geq 2$ we can find non-zero $y,z\in H$ such that $y$ and $z$ are orthogonal and $x,y,z$ lie in a two-dimensional subspace $V$.
Then  $P_V=P_y+P_z$, so that  
\[
P_x=P_V-(I-P_x)P_V=P_y+P_z-P_{\{x\}^\perp} P_V=P_y+P_z-P_w,
\] 
where $w\in\{x\}^\perp \cap (V\backslash\{0\})$.
We conclude that $P_x$ can be written as a linear combination of rank-one projections different from $P_x$ and, hence, the ray spanned by $P_x$ is engaged in $\mathcal{R}$.

Note that the positive linear span of the extreme rays equals the set of positive finite rank operators, which will be denoted $F$.
 To verify the condition in Theorem \ref{thrm,ext} it suffices to show that the inf-sup hull of $F$ equals $B(H)^+_{\mathrm{sa}}$, as the inf-sup hull is closed under  positive sums by (\ref{eq,infsup}).

We start by showing that  the identity $I$ belongs to the inf-sup hull of $F$.
Note that $I\geq P_x$ for all $x\in H$.
Suppose that $B\in B(H)_{\mathrm{sa}}$ is an upper bound of $P_x$ for all $x\in H$.
Then we have for any $x\in H$ that
 \begin{equation}\label{eq,id}\langle Bx,x\rangle \geq \langle P_x x,x\rangle =\langle Ix,x\rangle.\end{equation}
Therefore, $B\geq I$ holds and we conclude that $I=\sup\{P_x\colon x\in H\}.$
Note that it follows from (\ref{eq,id}) that for each $Q_0\in F$ with $Q_0\leq I$ we have that $$I=\sup\{Q\in F\colon Q_0\leq Q\leq I\},$$ as for all $x\in H$ there exists a $Q\in F$ with $Q_0\leq Q\leq I$ and $Q\geq P_x$.

Now suppose that $A\in B(H)_{\mathrm{sa}}^+$ is invertible. Let $T_A\colon B(H)_{\mathrm{sa}}\to B(H)_{\mathrm{sa}}$ be given  by $T_A(Q)=A^{\frac{1}{2}}QA^{\frac{1}{2}}$.
Then  $T_A$ is a linear order-isomorphism, so that 
\[A=T_A(I) = T_A(\sup\{Q\in F\colon  Q_0\leq Q\leq I\})= \sup\{T_A(Q): Q\in F,\;Q_0\leq Q\leq I\}.\]
As $T_A$ is a bijection from $F$ onto itself, we get that $A=\sup\{Q\in F: T_A(Q_0)\leq Q\leq A\}$.

Finally, suppose $A\in B(H)^+_{\mathrm{sa}}$. Remark that $A+I$ is invertible. For  $P\in F$, with $P\leq I$ we let $Q_0=T_{(A+I)^{-1}}(P)$.
Then $A+I=\sup\{Q\in F\colon  T_{A+I}(Q_0)\leq Q\leq A+I\}$, from which it follows that  $A+I-P=\sup\{Q-P\colon Q\in F,\; P\leq Q\leq A+I\}$. Thus, 
\[A = \inf \{A+I-P\colon P\in F, P\leq I\} = \inf\{\sup\{Q-P\colon  Q\in F,\;P\leq Q\leq A+I\}\colon P\in F,\;P\leq I\}.\]

This shows that $B(H)_{\mathrm{sa}}^+$ is the inf-sup hull of the positive linear span of its  extreme rays, and hence Theorem \ref{thrm,ext} yields the desired result.
\end{proof} 

We remark that Theorem \ref{thrm,bh} was first proved, using different arguments, by Moln\'ar  \cite{M} and does not follow from \cite[Theorem A]{NS2}.  

\section{Order-isomorphisms in related problems}

In this section we proceed the discussion of Section \ref{section3} and relate to results by Artstein-Avidan and Slomka and Sch\"affer in settings somewhat different than in Theorem \ref{thrm,ext}. We obtain three results. First, we present a ``diagonalization formula'' for order-isomorphisms between cones, see \eqref{eq,diag} below. Second, we apply the results of Section \ref{section3} to positively homogeneous order-isomorphisms between cones and obtain that they must be linear if one of the cones equals the inf-sup hull of the positive span of its extreme rays. Third, we consider separable complete order unit spaces where in one of them the inf-sup hull of the positive linear span of the engaged extreme rays is big enough to intersect the interior of the cone. In that case we derive from Theorem \ref{thrm,ext} that every order-isomorphism between upper sets must be affine.

We begin with the following infinite dimensional analogue of a result by Artstein-Avidan and Slomka \cite[Theorem~1.7]{AAS}. 

\begin{proposition}\label{prop,diag} 
Let $(X,C)$ and $(Y,K)$ be Archimedean partially ordered vector spaces and suppose that $f\colon C\to K$ is an order-isomorphism. Let $(v_\alpha)_{\alpha\in A}$ be a collection of linearly independent extreme vectors in $C$. Then there exist corresponding monotone increasing bijections $g_\alpha\colon [0,\infty)\to [0,\infty)$, for $\alpha\in A$, such that for all $\lambda_1,\ldots,\lambda_n\geq 0$ and $\alpha_1,\ldots,\alpha_n\in A$ we have
\begin{equation}\label{eq,diag} 
f\left(\sum_{i=1}^n \lambda_i v_{\alpha_i}\right)=\sum_{i=1}^n g_{\alpha_i}(\lambda_i)f(v_{\alpha_i}).
\end{equation}
\end{proposition}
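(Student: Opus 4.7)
The plan is to read off the functions $g_\alpha$ from how $f$ acts on each individual extreme ray through $0$, and then to obtain the full additivity in \eqref{eq,diag} from the parallelogram identity already developed in Section \ref{section3} (Lemmas \ref{lem,add} and \ref{kerstboom}).

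\textbf{Step 1: Identify $f$ with an order-isomorphism of type $[a,\infty)\to[b,\infty)$.} Since every $x\in C$ satisfies $0\le x$, the element $0$ is the (unique) minimum of $C$. As $f$ is an order-isomorphism onto $K$, the image $f(0)$ must be the minimum of $K$, so $f(0)=0$. Hence $f\colon[0,\infty)\to[0,\infty)$ in the notation of Section \ref{section3}, with $a=b=0$, so all results there (in particular Corollary \ref{thrm,ehl} and Lemma \ref{kerstboom}) are available.

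\textbf{Step 2: Define the functions $g_\alpha$ along each ray.} Fix $\alpha\in A$. The set $R_{v_\alpha}=\{\lambda v_\alpha:\lambda\ge 0\}$ is an extreme half-line with apex $0$, so by Corollary \ref{thrm,ehl} its image $f(R_{v_\alpha})$ is an extreme half-line in $K$ with apex $f(0)=0$; since it contains $f(v_\alpha)$, it equals $\{\mu\,f(v_\alpha):\mu\ge 0\}$. Therefore there is a unique function $g_\alpha\colon[0,\infty)\to[0,\infty)$ with
\[
f(\lambda v_\alpha)=g_\alpha(\lambda)\,f(v_\alpha), \qquad \lambda\ge 0,
\]
and $g_\alpha(0)=0$, $g_\alpha(1)=1$. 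Because $\lambda\mapsto \lambda v_\alpha$ and $\mu\mapsto \mu f(v_\alpha)$ are order-preserving parametrisations and $f$ is an order-isomorphism between these two rays, $g_\alpha$ is a monotone increasing bijection of $[0,\infty)$ onto itself.

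\textbf{Step 3: Derive the diagonalisation formula via Lemma \ref{kerstboom}.} Given $\lambda_1,\dots,\lambda_n\ge 0$ and $\alpha_1,\dots,\alpha_n\in A$, I may assume after combining repeated indices and discarding zero coefficients that the $\alpha_i$ are distinct and each $\lambda_i>0$. Put $s_i:=\lambda_i v_{\alpha_i}$. Each $s_i$ is a nonzero extreme vector in $C$, and linear independence of $(v_\alpha)_{\alpha\in A}$ forces $s_i\ne\lambda s_j$ for all $\lambda\in\mathbb{R}$ and $i\ne j$, because a relation $\lambda_i v_{\alpha_i}=\lambda\lambda_j v_{\alpha_j}$ with $\lambda_i,\lambda_j>0$ would contradict linear independence. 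Both $\sum_i s_i$ and each individual $s_i$ lie in $C=[0,\infty)$, so the hypotheses of Lemma \ref{kerstboom} (with $a=0$, $x=0$) are met, yielding
\[
f\!\left(\sum_{i=1}^n\lambda_i v_{\alpha_i}\right)-f(0)=\sum_{i=1}^n\bigl(f(\lambda_i v_{\alpha_i})-f(0)\bigr).
\]
Combining this with $f(0)=0$ and the identity from Step 2 gives exactly \eqref{eq,diag}.

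\textbf{Main obstacle.} No single step is deep; the delicate point is the hypothesis check in Step 3, where linear independence of the family $(v_\alpha)$ is precisely what guarantees pairwise non-proportionality of the $s_i$ and thus enables Lemma \ref{kerstboom}. Note that engagement of the rays is \emph{not} needed here, in agreement with the remark following Theorem \ref{thrm,ns}: the functions $g_\alpha$ are allowed to be arbitrary monotone bijections precisely because we apply the ray identity only at the single basepoint $x=0$ rather than demanding basepoint-independence.
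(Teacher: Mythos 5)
Your proof is correct and follows essentially the same route as the paper's: establish $f(0)=0$, use Corollary \ref{thrm,ehl} to define the monotone bijections $g_\alpha$ along each extreme ray, and then deduce \eqref{eq,diag} from Lemma \ref{kerstboom}, with your Step 3 merely making explicit the hypothesis check (pairwise non-proportionality of the $s_i$ via linear independence) that the paper leaves implicit. One caveat: your phrase ``combining repeated indices'' is not a genuine reduction, since the $g_\alpha$ need not be additive when the ray is disengaged; the formula is really meant for distinct $\alpha_1,\ldots,\alpha_n$, an implicit reading shared by the paper's own proof.
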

\begin{proof} 
Note that $f(0)=0$. Let  $r\in C$ be an extreme vector. According to Corollary \ref{thrm,ehl}, $f$ maps the extreme ray through $r$ bijectively onto the extreme ray through $f(r)$. Hence there exists a nonnegative scalar $g_{r}(\lambda)$ such that $f(\lambda r)=g_r(\lambda)f(r)$, for all $\lambda\geq 0$. Moreover, the function $g_{r}\colon [0,\infty)\to [0,\infty)$ is a monotone increasing bijection. Equation \eqref{eq,diag} now follows from Lemma \ref{kerstboom}.
\end{proof}
In \cite[Theorem~1.7]{AAS}, also the finite dimensional cases $f\colon X\to X$ and $f\colon C^\circ\to C^\circ$ are considered. In the situation of Proposition \ref{prop,diag}, if $f$ is an order-isomorphism from $X$ to $Y$ and $f(0)=0$, then one can easily verify that the maps $g_{r}$ are actually defined on $\mathbb{R}$ and that \eqref{eq,diag} holds for all $\lambda\in\mathbb{R}$. The infinite dimensional version of the case where  $f\colon C^\circ\to K^\circ$ is not so strong. Indeed, if $(X,C)$ and $(Y,K)$ are infinite dimensional order unit spaces, then one can adapt the proof of Proposition \ref{prop,diag} to show that for each order-isomorphism $f\colon C^\circ\to K^\circ$ and each collection $(v_\alpha)_{\alpha\in A}$ of linearly independent extreme vectors of $C$, there are linearly independent extreme vectors $(w_\alpha)_{\alpha\in A}$ of $K$ and monotone increasing bijections $g_\alpha\colon [0,\infty)\to [0,\infty)$, $\alpha\in A$, such that for all $\lambda_1,\ldots,\lambda_n\geq 0$ and $\alpha_1,\ldots,\alpha_n\in A$ we have \eqref{eq,diag} where $f(v_{\alpha_i})$ is replaced by $w_{a_i}$, provided that $\sum_{i=1}^n \lambda_i v_{\alpha_i}\in C^\circ$. However, in general infinite dimensional order unit spaces  most elements of the interior of the cone cannot be written as a positive linear combination of finitely many positive extreme vectors and, thus, the use of this result is limited.

Let us next consider positively homogeneous order-isomorphisms. If $U\subseteq X$ and $V\subseteq Y$ are such that $\lambda u\in U$ and $\lambda v\in V$ for every $u\in U$, $v\in V$, and $\lambda>0$, then a map $f\colon U\to V$ is called \emph{positively homogeneous} if $f(\lambda u)=\lambda f(u)$ for every $u\in U$ and $\lambda>0$. If $U$ and $V$ are generating Archimedean cones, then this condition implies that $f(0)=0$, which yields the more common definition that includes $\lambda=0$. The definition given here also applies to maps  on interiors of cones.

In \cite[Theorem B]{S2}, Sch\"{a}ffer provides the next result.

\begin{theorem}[Sch\"affer]\label{thm,schomogeneous}
Let $(X,C,u)$ and $(Y,K,v)$ be order unit spaces. Then every positively homogeneous order-isomorphism $f\colon C^\circ\to K^\circ$ is linear.
\end{theorem}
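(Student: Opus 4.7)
Since $f$ is positively homogeneous, linearity is equivalent to additivity on $C^\circ$; the plan is to prove $f(x+y) = f(x) + f(y)$ for all $x, y \in C^\circ$, combining the machinery of Section~\ref{section3} with positive homogeneity. Once additivity is established on $C^\circ$, linearity on all of $X$ follows because $C$ is generating.

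The first step is to observe that for any $a \in C^\circ$, every element $x \geq a$ is again an order unit (since $a$ is), so $[a,\infty) \subseteq C^\circ$; likewise $[f(a),\infty) \subseteq K^\circ$. Hence $f$ restricts to an order-isomorphism from $[a,\infty)$ onto $[f(a),\infty)$, and Proposition~\ref{prop,ns} yields that $f$ is affine on the inf-sup hull of $[a,\infty)_{\mathcal{R}_E}$. The second step is to exploit positive homogeneity to upgrade this conclusion: the identity $f(\mu z) = \mu f(z)$ for $\mu > 0$ relates the restrictions of $f$ to $[a,\infty)$ and $[\mu a,\infty)$, and by comparing the growth functions $g_{r,a}$ and $g_{r,\mu a}$ that appear in the proof of Theorem~\ref{thrm,ns}, one expects to conclude that $g_r(\lambda)=\lambda$ holds for every extreme vector $r$, not just for engaged ones. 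In other words, positive homogeneity should replace the engagement hypothesis, upgrading Proposition~\ref{prop,ns} to affinity on the inf-sup hull of the full $[a,\infty)_{\mathcal{R}}$.

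The main obstacle is that in a general order unit space the cone $C$ may have no extreme rays whatsoever, for example $X = C([0,1])$ with its standard cone; in such cases the preceding step is vacuous and the machinery of Section~\ref{section3} does not apply directly. To overcome this, I would invoke the Kadison functional representation to embed $(X,C,u)$ isometrically and order-preservingly into $C(S_X)$, where $S_X$ is the compact state space of $X$. On $C(S_X)^\circ$ the argument does apply: positively homogeneous order-isomorphisms admit a concrete weighted composition form $h \mapsto \gamma \cdot (h\circ \sigma)$ by a pointwise analysis, and are therefore linear. Transferring this back to $f$ via a continuity argument in the order unit norm should complete the proof; this is the delicate technical part, since $f$ is only defined on the image of $C^\circ$ and must be extended or approximated appropriately. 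Alternatively, one may follow Sch\"affer's original proof in \cite{S2}, which proceeds entirely within the order unit structure of $X$ and $Y$ without passing through extreme rays or a functional representation.
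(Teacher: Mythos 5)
The statement you are proving is not proved in the paper at all: it is quoted as Sch\"affer's Theorem~B from \cite{S2}, so your proposal has to stand on its own, and as it stands it has two genuine gaps. First, the route through Section~\ref{section3} cannot reach the general statement, as you yourself observe: in an arbitrary order unit space the cone may have no extreme rays whatsoever (e.g.\ $C([0,1])$ with the pointwise cone), so Proposition~\ref{prop,ns} applied to $[a,\infty)\subseteq C^\circ$ gives affinity only on the inf-sup hull of $[a,\infty)_{\mathcal{R}_E}$, which may reduce to $\{a\}$. Moreover, the intermediate claim that positive homogeneity forces $g_r(\lambda)=\lambda$ for disengaged extreme vectors is only asserted (``one expects to conclude''), not derived; the paper's Theorem~\ref{thrm,diag} does carry out an argument of this flavour, but it works on the cone $C$ itself (not on $C^\circ$) and still needs $C$ to equal the inf-sup hull of the positive span of its extreme rays, a hypothesis absent here.

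Second, the Kadison-representation rescue does not work as described. The functional representation maps $X$ order-isomorphically onto a closed subspace $\hat{X}$ of $C(S_X)$ that is in general a proper subspace, so $f$ only induces a map between the images of $C^\circ$ and $K^\circ$, which are not upper sets in $C(S_X)$ and $C(S_Y)$ and on which suprema and infima of the ambient function spaces are not available. The weighted-composition description of homogeneous order-isomorphisms (Sch\"affer \cite{S1}) applies to isomorphisms between the full cones (or cone interiors) of the function spaces, and there is no reason, and no argument offered, that $f$ extends to such an isomorphism of $C(S_X)^\circ$ onto $C(S_Y)^\circ$; order-isomorphisms between embedded subcones need not extend. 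So the ``delicate technical part'' you defer is not a technicality, it is essentially the entire content of the theorem. The only complete route in your write-up is the closing sentence, namely to follow Sch\"affer's proof in \cite{S2}, which is precisely the citation the paper itself relies on; if that is the intended proof, the preceding construction is superfluous, and if it is not, the argument is incomplete.
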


The results of Section \ref{section3} yield the following alternative statement, in which the requirement of an order unit is replaced by a condition involving extreme rays.

\begin{theorem}\label{thrm,diag} Let $(X,C)$ and $(Y,K)$ be Archimedean partially ordered vector spaces such that $(X,C)$ is directed  and $C$ equals the inf-sup hull of $[0,\infty)_\mathcal{R}$. Then every positively homogeneous order-isomorphism $f\colon C\to K$ is linear.\end{theorem}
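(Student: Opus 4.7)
The plan is to mirror the three-step strategy used in the proof of Theorem \ref{thrm,ext}: first establish linearity on the positive span $[0,\infty)_{\mathcal{R}}$ of the extreme rays, then propagate it to the inf-sup hull (which by hypothesis is all of $C$), and finally promote the resulting convex-linearity to genuine linearity. Positive homogeneity will play the role that engagement played in Theorem \ref{thrm,ns}; this is exactly what lets us now work with all extreme rays rather than only the engaged ones.

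For the first step, positive homogeneity immediately forces $f(0)=0$ and $f(\lambda r)=\lambda f(r)$ for every $\lambda\geq 0$ and every extreme vector $r\in C$. Viewing $f$ as an order-isomorphism from $[a,\infty)=C$ to $[b,\infty)=K$ with $a=0$ and $b=0$, I would apply Lemma \ref{kerstboom} at $x=0$ to any collection $s_1,\ldots,s_n$ of extreme vectors lying on pairwise distinct rays to obtain $f(s_1+\cdots+s_n)=\sum_i f(s_i)$. Grouping contributions within each ray and using homogeneity there, this upgrades to $f(\sum_i\lambda_i r_i)=\sum_i\lambda_i f(r_i)$ for arbitrary extreme vectors $r_i\in C$ and nonnegative scalars $\lambda_i$, so in particular $f$ is convex-linear on $[0,\infty)_{\mathcal{R}}$. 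For the second step I would copy the proof of Lemma \ref{lem,extend} essentially verbatim: given $x,y\in C$ and $0\leq t\leq 1$, write $x=\inf_\alpha\sup_\beta x_{\alpha,\beta}$ and $y=\inf_\sigma\sup_\tau y_{\sigma,\tau}$ with $x_{\alpha,\beta},y_{\sigma,\tau}\in[0,\infty)_{\mathcal{R}}$, observe that $[0,\infty)_{\mathcal{R}}$ is stable under nonnegative sums and nonnegative scalar multiples so that each combination $tx_{\alpha,\beta}+(1-t)y_{\sigma,\tau}$ lies in $[0,\infty)_{\mathcal{R}}$, and then let the inf-sup manipulation \eqref{eq,infsup}, together with the preservation of infima and suprema by $f$ and $f^{-1}$, deliver $f(tx+(1-t)y)=tf(x)+(1-t)f(y)$ on all of $C$.

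The third step is then short: convex-linearity on the convex set $C$ combined with positive homogeneity yields additivity, since $f(u+v)=2f\bigl(\tfrac12(u+v)\bigr)=f(u)+f(v)$ for $u,v\in C$. As $C$ is generating, the formula $F(u-v):=f(u)-f(v)$ is well defined on $X=C-C$ and produces a linear extension of $f$, which is exactly what it means for $f$ to be linear in the sense of Section \ref{section3}. I do not foresee any substantial obstacle; the only technical point that requires a sentence of care is verifying that $[0,\infty)_{\mathcal{R}}$ is closed under the nonnegative combinations needed by the inf-sup argument, but this is immediate from the fact that a positive scalar multiple of an extreme vector is again an extreme vector.
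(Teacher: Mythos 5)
Your plan is correct and follows essentially the same route as the paper's proof: additivity on $[0,\infty)_{\mathcal{R}}$ via Lemma \ref{kerstboom} combined with positive homogeneity to handle extreme vectors sharing a ray, then Lemma \ref{lem,extend} to pass to the inf-sup hull, which is $C$. Your closing step spelling out why convex-linearity plus homogeneity yields a linear extension on $X=C-C$ is a detail the paper leaves implicit, but it is the same argument.
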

\begin{proof} 
We first show that $f$ is additive on $[0,\infty)_\mathcal{R}$. Let $s_1,\ldots,s_n$ be extreme vectors in $C$. It suffices to show that $f\left(\sum_{i=1}^n s_i\right)=\sum_{i=1}^n f(s_i)$. In order to apply Lemma \ref{kerstboom}, we combine terms of $s_i$ that lie on the same ray. Indeed, for $j=1,\ldots,m$, let $I_j\subseteq \{1,\ldots, n\}$ be disjoint with $\bigcup_{j=1}^m I_j=\{1,\ldots,n\}$ such that for every $i,k\in \{1,\ldots, n\}$ we have $s_i=\lambda s_k$ for some $\lambda\ge 0$ if and only if there exists $j\in \{1,\ldots,m\}$ with $i,k\in I_j$. Denote $r_j=\sum_{i\in I_j}s_i$ and for every $i\in I_j$ let $\lambda_i$ be such that $s_i=\lambda_i r_j$. Then $\sum_{i\in I_j} \lambda_i=1$ for $j=1,\ldots,m$. With the aid of Lemma \ref{kerstboom} and the positive homogeneity of $f$ we obtain
\begin{align*}
f\left(\sum_{i=1}^n s_i\right) &= f\left(\sum_{j=1}^m r_j\right)
= \sum_{j=1}^m f\left( r_j\right) =  \sum_{j=1}^m \sum_{i\in I_j} \lambda _i f\left( r_j\right)\\
&= \sum_{j=1}^m \sum_{i\in I_j}  f\left(\lambda _i r_j\right)= \sum_{i=1}^n f(s_i).
\end{align*}
As $f$ is positively homogeneous, it follows that $f$ is linear on $[0,\infty)_\mathcal{R}$. Due to Lemma \ref{lem,extend} we obtain that $f$ is linear on the inf-sup hull of $[0,\infty)_\mathcal{R}$, which equals $C$.
\end{proof}

If in Theorem \ref{thrm,diag} $f$ is an order-isomorphism from $X$ to $Y$ and $f$ is homogeneous instead of only positively homogeneous, then it can be shown along similar lines that $f$ is affine. 

It is useful to compare Theorem \ref{thm,schomogeneous} and Theorem \ref{thrm,diag}  and identify the differences. Let $(X,C,u)$ and $(Y,K,v)$ be order unit spaces. Suppose that $f\colon C\to K$ is a positively homogeneous order-isomorphism. Then straightforward verification yields $f(C^\circ)=K^\circ$. Hence it follows by Theorem \ref{thm,schomogeneous} that $f$ is linear on $C^\circ$. As $C$ is the inf hull of the convex set $C^\circ$, it follows from Lemma \ref{lem,extend} that $f$ is linear on $C$. Thus, any homogeneous order-isomorphism between cones of order unit spaces is linear. Theorem \ref{thrm,diag} provides a condition, alternative to having an order unit, that yields the same conclusion. For example, the space $\ell^p(\mathbb{N})$ for $1\le p\le \infty$ with coordinate-wise order satisfies the conditions of Theorem \ref{thrm,diag} but fails to have an order unit. Hence Sch\"affer's Theorem \ref{thm,schomogeneous} does not imply our Theorem \ref{thrm,diag}. 

Our third interest in this section is an intermediate result by Sch\"{a}ffer, which has a milder homogeneity condition than Theorem \ref{thm,schomogeneous}. In \cite[Corollary~A1]{S2} Sch\"{a}ffer shows for order unit spaces $(X,C,u)$ and $(Y,K,v)$, where either $(X,\|.\|_u)$ or $(Y,\|.\|_v)$ is separable and complete, that any order-isomorphism $f\colon C^\circ\to K^\circ$ is linear, provided there exists a $w\in C^\circ$ such that $f(\lambda w)=\lambda f(w)$ for all $\lambda\ge 0$. Compared to \cite[Theorem~B]{S2}, the positively homogeneous condition of $f$ is weakened to only being positively homogeneous on a ray through the interior of the cone, at the cost of one of the order unit spaces being separable and complete. In conjunction with Theorem \ref{thrm,ext} this yields the following. 

\begin{theorem}\label{thrm,4ray}
Let $(X,C,u)$ and $(Y,K,v)$ be order unit spaces, and $U\subseteq X$ and $V\subseteq Y$ be upper sets. Suppose that the inf-sup hull of $[0,\infty)_{\mathcal{R}_E}$ has a non-empty intersection with $C^\circ$, and that either $(X,\|.\|_u)$ or $(Y,\|.\|_v)$ is separable and complete. Then every order-isomorphism $f\colon U\to V$ is affine.
\end{theorem}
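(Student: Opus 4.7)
The plan is to reduce to showing that $f$ is affine on each translated cone $[a,\infty)$ for $a\in U$, and then to glue these local affine pieces together exactly as in the proof of Theorem \ref{thrm,ext}. Fix $a\in U$. Since $V$ is an upper set, $f$ restricts to an order-isomorphism $[a,\infty)\to[f(a),\infty)$. Passing to the translated map $\tilde f\colon C\to K$, $\tilde f(x)=f(a+x)-f(a)$, it suffices to prove that $\tilde f$ is affine on $C$; note that $\tilde f(0)=0$ and that $\tilde f$ is an order-isomorphism.

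To set up Sch\"affer's \cite[Corollary~A1]{S2}, I first produce positive homogeneity of $\tilde f$ on a ray through $C^\circ$. By hypothesis there is $w\in C^\circ$ lying in the inf-sup hull $D$ of $[0,\infty)_{\mathcal{R}_E}$; as $[0,\infty)_{\mathcal{R}_E}$ is closed under positive scalar multiplication, so is $D$. Proposition \ref{prop,ns} shows that $f$ is affine on the inf-sup hull of $[a,\infty)_{\mathcal{R}_E}=a+D$, which contains both $a$ and the ray $a+\{\lambda w\colon\lambda\ge 0\}$, and this affinity translates into $\tilde f(\lambda w)=\lambda\tilde f(w)$ for every $\lambda\ge 0$. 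Next I would verify that $\tilde f$ restricts to a bijection $C^\circ\to K^\circ$. Because $w$ is an order unit in $(X,C)$, every $y\in K$ satisfies $\tilde f^{-1}(y)\le\mu w$ for some $\mu\ge 0$, whence $y\le\tilde f(\mu w)=\mu\tilde f(w)$, so $\tilde f(w)$ is an order unit and $\tilde f(w)\in K^\circ$. For arbitrary $x\in C^\circ$ I choose $\lambda_1>0$ with $\lambda_1 w\le x$; then $\lambda_1\tilde f(w)\le\tilde f(x)$, and since $K^\circ$ is an upper set this forces $\tilde f(x)\in K^\circ$. The reverse inclusion $\tilde f^{-1}(K^\circ)\subseteq C^\circ$ follows by the same argument applied to $\tilde f^{-1}$, now using the ray through $\tilde f(w)$ on which $\tilde f^{-1}$ is positively homogeneous.

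With $\tilde f\colon C^\circ\to K^\circ$ an order-isomorphism satisfying $\tilde f(\lambda w)=\lambda\tilde f(w)$, and with the separability/completeness hypothesis in place, Sch\"affer's \cite[Corollary~A1]{S2} yields that $\tilde f$ is linear on $C^\circ$. Since $C^\circ$ is a convex subset of $C$, Lemma \ref{lem,extend} promotes this to affinity of $\tilde f$ on the inf-sup hull of $C^\circ$. The Archimedean property together with the order unit $u$ gives $x=\inf_n(x+u/n)$ with $x+u/n\in C^\circ$ for every $x\in C$, so $C$ itself is contained in that inf-sup hull. Hence $\tilde f$ is affine on $C$, which means $f$ is affine on $[a,\infty)$ for every $a\in U$. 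The gluing step from the proof of Theorem \ref{thrm,ext}, relying only on directedness and generation of $C$ together with $U$ being an upper set, then assembles these local affine extensions into a single affine map on $\operatorname{aff}(U)=X$ that agrees with $f$ on $U$.

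The step I expect to be the main obstacle is proving that $\tilde f$ carries $C^\circ$ onto $K^\circ$, as this is the bridge to Sch\"affer's corollary. The crux is leveraging positive homogeneity on the single ray $\{\lambda w\colon\lambda\ge 0\}$ to convert the order-unit characterisation in $(X,C)$ into the corresponding one in $(Y,K)$ via the bijection $\tilde f$.
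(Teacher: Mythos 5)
Your proposal is correct and is essentially the paper's proof: the same ingredients (Proposition \ref{prop,ns} giving affinity on the inf-sup hull of the engaged part and hence positive homogeneity along a ray through $C^\circ$; the order-unit argument showing $C^\circ$ is carried onto $K^\circ$; Sch\"affer's Corollary A1; Lemma \ref{lem,extend} together with $C$ being the inf hull of $C^\circ$; and the gluing argument from the proof of Theorem \ref{thrm,ext}) appear in the same roles. The only difference is organisational: you translate immediately to $\tilde f\colon C\to K$ with $\tilde f(0)=0$ and deduce the ray-homogeneity from that (which is fine, since $0$ does lie in the inf-sup hull of $[0,\infty)_{\mathcal{R}_E}$, e.g.\ $0=\inf_n (r/n)$ for an engaged extreme vector $r$ by Archimedeanity -- a one-line point you should state), whereas the paper first treats the case $U=C^\circ$, $V=K^\circ$ separately and then reduces the cone, translated-cone, and general upper-set cases to it.
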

\begin{proof}
Firstly, we consider the case $U=C^\circ$ and $V=K^\circ$.
Let $C_E$ denote the inf-sup hull of the positive linear span of the engaged extreme rays of $C$.
By assumption there exists $x\in C_E\cap C^\circ$.
We recall that an order unit space is directed and Archimedean.
Hence, Proposition \ref{prop,ns} says that $f$ is affine on $C_E\cap C^\circ$.
As $f$ is an order-isomorphism mapping $C^\circ$ onto $K^\circ$, it is straightforward to infer that $f$ is in fact linear on $C_E\cap C^\circ$. 
In particular, $f(\lambda x)=\lambda f(x)$ for all $\lambda> 0$. 
Now \cite[Corollary~A1]{S2} yields that $f$ is linear on $C^\circ$.

Next we consider the case $U=C$ and $V=K$. Just as in the previous paragraph, there exists an $x\in C^\circ$ such that $f(\lambda x)=\lambda x$ for all $\lambda\geq 0$. We infer that $f(C^\circ)=K^\circ$. Indeed, let $y\in K$. As $x\in C^\circ$ there exists $\lambda \geq 0$ such that $\lambda x\geq f^{-1}(y)$. This yields that $\lambda f(x)=f(\lambda x)\geq y$.
Therefore, $f(x)$ is an order unit in $(Y,K)$ and hence $f(x)\in K^\circ$.
Now let $y\in C^\circ$. Then there exists $m>0$ such that $m x\leq y$.
We get $m f(x)=f(m x)\leq f(y)$.
In particular, $f(y)$ is an order unit and we conclude that $f(y)\in K^\circ$. Hence $f(C^\circ)\subseteq K^\circ$.
We remark that for all $\lambda\ge 0$ we have $f^{-1}(\lambda f(x))=\lambda x=\lambda f^{-1}(f(x))$, in other words $f^{-1}$ is positively homogeneous along the ray through $f(x)$.
Therefore, the previous steps applied to $f^{-1}$ instead of $f$ yield the converse inclusion $K^\circ\subseteq f(C^\circ)$.
By the first part of the proof we obtain that $f$ is linear on $C^\circ$.
Since $C$ is the inf hull of the convex set $C^\circ$, it follows from Lemma \ref{lem,extend} that $f$ is linear on $C$.

Suppose $a\in X$ and $b\in Y$ are such that $U=[a,\infty)$ and $V=[b,\infty)$. The order-isomorphism $\hat{f}$ defined by $\hat{f}(c)=f(c+a)-b$ maps $C$ to $K$. By the previously considered case $\hat{f}$ is linear, and hence $f$ is affine.

The general case where $U\subseteq X$ and $V\subseteq Y$ are upper sets follows by arguments similar to those made in the proof of Theorem \ref{thrm,ext}. Indeed, for every $a\in U$, $f$ is an order-isomorphism from $[a,\infty)$ to $[f(a),\infty)$, so that $f$ is affine on $[a,\infty)$ by the previous case. Then $f|_{[a,\infty)}$ extends to a unique affine map $F\colon X\to Y$, which is independent of $a\in U$, as $(X,C)$ is directed.\end{proof}

To conclude the paper we provide an example to which Theorem \ref{thrm,4ray} applies, but not Theorem \ref{thrm,ext}.  Consider the order unit space $(X,C,u)$ consisting of the real vector space $X=C([0,1]\cup [2,3])\oplus \mathbb{R}$, the Archimedean cone 
\[C=\{(f,\lambda)\colon \|f\|_\infty\leq\lambda\}\]
and the order unit $u=(0,1)\in C$. Then $(X,\|.\|_u)$ is complete and separable.  
The unit ball
\[ B =\{f\in C([0,1]\cup [2,3])\colon \|f\|_\infty\leq 1\}
\]
has four extreme points: $\pm\mathds{1}_{[0,1]}$ and $\pm\mathds{1}_{[2,3]}$, where $\mathds{1}_{[0,1]}$ and $\mathds{1}_{[2,3]}$ denote the indicator functions of $[0,1]$ and $[2,3]$, respectively. Therefore, $C$ has four extreme rays, namely the rays through  $(\pm\mathds{1}_{[0,1]},1)$ and $(\pm\mathds{1}_{[2,3]},1)$.   
As  \[(\mathds{1}_{[0,1]},1)+(-\mathds{1}_{[0,1]},1)=2u=(\mathds{1}_{[2,3]},1)+(-\mathds{1}_{[2,3]},1),\] 
all four extreme rays are engaged, and $u$ which lies in $C^\circ$ is contained in the positive linear span of the engaged extreme rays. We conclude that the order unit space $ (X,C,u)$ satisfies the 
conditions of Theorem  \ref{thrm,4ray}. However, the inf-sup hull of the sum of the engaged extreme rays consist only of elements of the form $(\lambda\mathds{1}_{[0,1]}+\mu\mathds{1}_{[2,3]},\nu)$, with $\lambda,\mu\ge 0$ and $|\lambda|,|\mu|\le\nu$, and hence $(X,C)$ does not satisfy the conditions of Theorem \ref{thrm,ext}.


\footnotesize


\begin{thebibliography}{11}

\bibitem{Aczel} J. Acz\'el and H. Oser, \emph{Lectures on functional equations and their applications}, Academic Press, New York, 1966.

\bibitem{A} A.D. Alexandrov, A contribution to chronogeometry. {\em Canad. J. Math.} {\textbf 19}, (1967), 1119--1128.

\bibitem{AO} A.D. Alexandrov and V.V. Ov\v{c}innikova, Notes on the foundations of relativity theory. {\em Proc. Amer. Math. Soc.} {\bf 11}, (1953), 95--110.

\bibitem{AT} C.D. Aliprantis and R. Tourky, {\em Cones and duality}. Graduate Studies in Mathematics, 84. American Mathematical Society, Providence, RI, 2007.

\bibitem{AAS} S. Artstein-Avidan and B.A. Slomka, order-isomorphisms in cones and a characterization of duality of ellipsoids. {\em Selecta Math. (N.S.)} {\bf 18}(2), (2012), 391--415.

\bibitem{M} L. Moln{\'a}r, Order-automorphisms of the set of bounded observables.  {\em J. Math. Phys.} {\bf 42}(12), (2001), 5904--5909.

\bibitem{NS} W. Noll and J.J. Sch{\"a}ffer, Orders, gauge, and distance in faceless linear cones; with examples relevant to continuum mechanics and relativity. {\em Arch. Rational Mech. Anal.} {\textbf 66}(4), (1977), 345--377.

\bibitem{NS2} W. Noll and J.J. Sch{\"a}ffer, Order-isomorphisms in affine spaces. {\em Ann. Mat. Pura Appl. (4)} {\bf 117}, (1978), 243--262.

\bibitem{R} O.S. Rothaus, order-isomorphisms of cones. {\em Proc. Am. Math. Soc.} {\textbf 17}, (1966), 1284--1288.

\bibitem{S1} J.J. Sch\"{a}ffer, Order-isomorphisms between cones of continuous functions. {\em Annali di Matematica} {\bf 119}(1), (1979), 205--230.

\bibitem{S2} J.J. Sch\"{a}ffer, Orders, gauge, and distance in faceless linear cones. II: gauge-preserving bijection are cone-isomorphisms. {\em Arch. Rational Mech. Anal.} {\bf 67}(4), (1978), 305--313.

\bibitem{Sem} P. \v{S}emrl, order-isomorphisms of Operator Intervals. {\em Integr. Equ. Oper. Theory} {\bf 89}(1), (2017), 1--42.

\bibitem{Z} E.C. Zeeman, Causality Implies the Lorentz Group. {\em J. Math. Phys.} {\bf 5}(4), (1964), 490--493.
\end{thebibliography}
\end{document}